\newcommand{\Cov}{\mathop{\mathrm{Cov}}}
\theoremstyle{plain} %
\newtheorem{theorem}{Theorem}[section]
\newtheorem{lemma}[theorem]{Lemma}
\newtheorem{remark}[theorem]{Remark}
\title[A note on the elephant random walk with stops]{A note on the long time behavior of\\ the elephant random walk with stops}
\author{Tatsuya Akimoto}%\thanks{Date: \today}
\address{Graduate School of Engineering Science, Yokohama National University, 79-5 Tokiwadai, Hodogaya-ku, Yokohama, 240-8501, Kanagawa, Japan}
\author{Masato~Takei}%\thanks{Date: \today}
\address{Department of Applied Mathematics, Faculty of Engineering, Yokohama National University, 79-5 Tokiwadai, Hodogaya-ku, Yokohama, 240-8501, Kanagawa, Japan}
\email{takei-masato-fx@ynu.ac.jp}
\author{Keisuke Taniguchi}%\thanks{Date: \today}
\address{Graduate School of Engineering Science, Yokohama National University, 79-5 Tokiwadai, Hodogaya-ku, Yokohama, 240-8501, Kanagawa, Japan}
\begin{document}
\maketitle

\begin{abstract}
We study the long time behavior of the elephant random walk with stops, introduced by Kumar, Harbola and Lindenberg (2010), and
establish the phase transition of  the number of visited points up to time $n$, and the correlation between the position at time $n$ and the number of moves up to time $n$. %, as $n \to \infty$. coefficient 
\end{abstract}

%% main text
\section{Introduction}
\label{intro}

Recently there has been considerable interest in the elephant random walk (ERW), introduced by Sch\"{u}tz and Trimper \cite{SchutzTrimper04} (see Laulin \cite{Laulin22PhD} and references therein).
In this note we study the elephant random walk with stops (ERWS), which is a crucial generalization introduced by Kumar, Harbola and Lindenberg \cite{KumarHarbolaLindenberg10PRE}.
Let $s \in [0,1]$. Assume that $p,q \in [0,1]$ and $r \in [0,1)$ satisfy $p+q+r=1$. The first step $X_1$ of the walker is $+1$ with probability $s$, and $-1$ with probability $1-s$.
%\begin{align}
%\mbox{}
%X_1 = \begin{cases}
%+1 & \text{with probability $s$,} \\
%-1 & \text{with probability $1-s$.}
%\end{cases}
%\label{def:ERWfirststep}
%\end{align}
%$+1$ with probability $s$, and $-1$ with probability $1-s$.
For each $n \in \mathbb{N} := \{1,2,\ldots\}$, let $U_n$ be uniformly distributed on $\{1,\ldots,n\}$, and
\begin{align}
X_{n+1} &=  \begin{cases}
X_{U_n} &\mbox{with probability $p$,} \\
-X_{U_n} &\mbox{with probability $q$,} \\
0 &\mbox{with probability $r$.} \\
\end{cases}
\label{def:ERWwithStopsNextsteps}
\end{align} 
Each of choices in the above procedure is made independently. A one-dimensional random walk $\{S_n\}$ is defined by
$S_0:=0$, and $\displaystyle S_n= \sum_{i=1}^n X_i $ for $n\in \mathbb{N}$.
%\[ S_0:=0,\quad \mbox{and} \quad S_n= \sum_{i=1}^n X_i \quad \mbox{for $n\in \mathbb{N}$.} \]
The standard ERW corresponds to the special case $r=0$.

It turns out to be convenient to introduce the following new parameters:
$a:=p-q$, and $b:=1-r$.
%\begin{align*}
%a:=p-q,\quad \mbox{and} \quad b:=1-r.
%\end{align*}
Note that $b \in (0,1]$ and $a \in [-b,b]$.

An important quantity for the ERWS is the number $\Sigma_n$ of moves up to time $n$, i.e.,
\begin{align}
\Sigma_0:=0,\quad \mbox{and} \quad \Sigma_n := \sum_{i=1}^n 1_{\{X_i \neq 0\}} = \sum_{i=1}^n X_i^2 \quad \mbox{for $n\in \mathbb{N}$}.
\label{eq:ERWSDefSigma}
\end{align}
Let $\mathcal{F}_n$ be the $\sigma$-algebra generated by $X_1,\ldots,X_n$. Since $X_{n+1}^2 \in \{0,1\}$ and $P(X_{n+1}^2 = 1 \mid \mathcal{F}_n)  = (b\Sigma_n)/n$ for $n\in \mathbb{N}$,
%\begin{align*}
%P(X_{n+1}^2 = 1 \mid \mathcal{F}_n) 
%=1-P(X_{n+1}^2 = 0 \mid \mathcal{F}_n)= b \cdot \dfrac{\Sigma_n}{n}\quad \mbox{for $n\in \mathbb{N}$},
%\end{align*}
the process $\{\Sigma_n\}$ is nothing but the `laziest' minimal random walk model of elephant type with parameter $b$, %$1-r$,
studied in detail in Miyazaki and Takei \cite{MiyazakiTakei20JSP}. By Corollary 1 in \cite{MiyazakiTakei20JSP} (see also Lemma 2.1 in \cite{Bercu22}),
%It is known that 
\begin{align}
\lim_{n \to \infty} \dfrac{\Sigma_n}{n^b} = \Sigma>0 \quad \mbox{a.s.,} 
%\lim_{n \to \infty} \dfrac{\Sigma_n}{n^{1-r}} 
\label{eq:ERWSSigmanLimit}
\end{align}
where $\Sigma$ has a Mittag--Leffler distribution with parameter $b$. %$1-r$.
%The convergence \eqref{eq:ERWSSigmanLimit} was first shown in Theorem 3.1 of Gut and Stadtm\"{u}ller \cite{GutStadtmuller21SPL} using a martingale argument. %in 
%Bercu \cite{Bercu22} identified the distribution of $\Sigma$ using the moment method.  Thus the convergence result \eqref{eq:ERWSSigmanLimit} follows also from Corollary 1 of \cite{MiyazakiTakei20JSP}.

%By using the important fact that $\Sigma>0$ with probability one, 
In a remarkable paper \cite{Bercu22}, Bercu obtained several limit theorems for the ERWS $\{S_n\}$. Among other results in \cite{Bercu22}, we quote the central limit theorem and the law of the iterated logarithm.  %(CLT) (LIL)  
For a fixed $b \in (0,1]$, the critical value for $a \in [-b,b]$ is given by $a_c(b):=b/2$. 
%For a fixed $r \in [0,1)$, the critical value for $p \in [0,1-r]$ is given by $p_c(r):=\dfrac{3}{4} (1-r)$. 
 %$p_r := p/(1-r)$

\begin{theorem}[Bercu \cite{Bercu22}] \label{thm:Bercu22CLTLIL} Let $\phi(x):= \sqrt{2x\log \log x} $. \\
(i) (Subcritical regime)  If $a \in [-b,b/2)$ then $\dfrac{S_n}{\sqrt{\Sigma_n}}\stackrel{d}{\to}N\left(0,\dfrac{b}{b-2a}\right)$ as $n \to \infty$, and $\displaystyle \limsup_{n \to \infty} \pm \frac{S_n}{\phi(\Sigma_n)}=\sqrt{\dfrac{b}{b-2a}}$ a.s..\\
(ii) (Critical regime) If $a=b/2$ then $\dfrac{S_n}{\sqrt{\Sigma_n \log \Sigma_n}} \stackrel{d}{\to}N(0,1)$ as $n \to \infty$, and $\displaystyle \limsup_{n \to \infty} \pm \frac{S_n}{\phi(\Sigma_n \log \Sigma_n)}=1$ a.s.\\
(iii) (Supercritical regime) If $a \in (b/2,b]$ then there exists a random variable $L$ such that $P(L \neq 0)>0$ and $\displaystyle \lim_{n \to \infty}\frac{S_n}{n^a}=L$ a.s. 
\end{theorem}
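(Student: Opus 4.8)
The plan is to reduce all three parts to the analysis of a single martingale. From \eqref{def:ERWwithStopsNextsteps} one computes $E[X_{n+1}\mid\mathcal{F}_n]=aS_n/n$, so $E[S_{n+1}\mid\mathcal{F}_n]=\frac{n+a}{n}\,S_n$; hence, setting $a_n:=\Gamma(n)\Gamma(1+a)/\Gamma(n+a)$ (so that $a_1=1$ and $a_nn^a\to\Gamma(1+a)$), the process $M_n:=a_nS_n$ is an $(\mathcal{F}_n)$-martingale with $M_1=X_1$ (throughout we assume $a>-1$; the degenerate case $a=-1$, forcing $b=1$, is subcritical and handled by a separate, elementary argument). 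A direct computation gives the increment $\Delta M_n=a_{n+1}\bigl(X_{n+1}-E[X_{n+1}\mid\mathcal{F}_n]\bigr)$, and therefore
\begin{equation*}
\langle M\rangle_n=\sum_{k=1}^{n-1}a_{k+1}^2\left(\frac{b\Sigma_k}{k}-\frac{a^2S_k^2}{k^2}\right).
\end{equation*}
Since $a_{k+1}^2\sim\Gamma(1+a)^2k^{-2a}$ and, by \eqref{eq:ERWSSigmanLimit}, $\Sigma_k\sim\Sigma\,k^b$ a.s., the first sum is of order $\sum_kk^{b-1-2a}$; a crude $L^2$-estimate for $E[S_k^2]$, obtained by solving the recursion $E[S_{k+1}^2]=(1+2a/k)E[S_k^2]+bE[\Sigma_k]/k$ together with $E[\Sigma_k]=\Gamma(k+b)/(\Gamma(1+b)\Gamma(k))\sim k^b/\Gamma(1+b)$, shows the second sum is of strictly smaller order. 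The exponent $b-1-2a$ changes sign exactly at $a=b/2$, which is the source of the trichotomy.

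\emph{Supercritical regime $a\in(b/2,b]$.} Here $b-1-2a<-1$, so $E[\langle M\rangle_\infty]<\infty$ and hence $\sup_nE[M_n^2]=E[M_1^2]+E[\langle M\rangle_\infty]<\infty$. Thus $M_n$ converges a.s.\ and in $L^2$ to some $M_\infty$, and consequently $S_n/n^a=M_n/(a_nn^a)\to M_\infty/\Gamma(1+a)=:L$ a.s. Finally $M_n^2$ is a submartingale, so $E[L^2]=\lim_nE[M_n^2]/\Gamma(1+a)^2\ge E[M_1^2]/\Gamma(1+a)^2=\Gamma(1+a)^{-2}>0$, whence $P(L\ne0)>0$.

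\emph{Subcritical and critical regimes.} A Toeplitz (generalized Ces\`aro) argument applied to \eqref{eq:ERWSSigmanLimit} gives $\langle M\rangle_n/n^{b-2a}\to V:=b\Sigma\Gamma(1+a)^2/(b-2a)$ a.s.\ when $a\in[-b,b/2)$, and $\langle M\rangle_n/\log n\to b\Sigma\Gamma(1+a)^2$ a.s.\ when $a=b/2$. Since $|\Delta M_n|\le 2a_{n+1}$ is, in each regime, negligible compared with $v_n:=n^{(b-2a)/2}$, resp.\ $v_n:=\sqrt{\log n}$, the conditional Lindeberg condition holds trivially for large $n$, and the central limit theorem for martingales with random normalization (stable convergence) yields $M_n/v_n\to\eta Z$ stably, with $Z\sim N(0,1)$ independent of $\mathcal{F}_\infty$ and $\eta^2=V$, resp.\ $\eta^2=b\Sigma\Gamma(1+a)^2$. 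Writing $S_n/\sqrt{\Sigma_n}=M_n/(a_n\sqrt{\Sigma_n})$ with $a_n\sqrt{\Sigma_n}\sim\Gamma(1+a)\,v_n\sqrt{\Sigma_n/n^b}$, resp.\ $S_n/\sqrt{\Sigma_n\log\Sigma_n}=M_n/(a_n\sqrt{\Sigma_n\log\Sigma_n})$ with $a_n\sqrt{\Sigma_n\log\Sigma_n}\sim\Gamma(1+a)\sqrt{b}\,v_n\sqrt{\Sigma_n/n^b}$, and using $\Sigma_n/n^b\to\Sigma$ a.s., the random factor $\sqrt{\Sigma}$ hidden in $\eta$ cancels against the $\sqrt{\Sigma}$ in the denominator, and one is left with the genuinely Gaussian limits $N(0,b/(b-2a))$, resp.\ $N(0,1)$. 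The laws of the iterated logarithm follow in the same way from the martingale law of the iterated logarithm (Stout), applied to $M_n$ with the above a.s.\ asymptotics of $\langle M\rangle_n$ and the same rescaling, the normalizing function $\phi$ being dictated by the growth of $\langle M\rangle_n$ and the a.s.\ scaling of $\Sigma_n$; the $\liminf$ (the $-$ sign) statements then follow from the $\limsup$ ones by the symmetry $S\leftrightarrow-S$ (equivalently $s\leftrightarrow1-s$), which affects none of the limits.

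The principal obstacle is this random normalization in parts (i) and (ii): because $\langle M\rangle_n$, divided by a deterministic sequence, converges only to a random limit proportional to $\Sigma$, the martingale CLT/LIL a priori produces a mixture of Gaussians, and the real content is to combine it — via stable convergence — with the a.s.\ asymptotics \eqref{eq:ERWSSigmanLimit} and to verify that normalizing $S_n$ by $\sqrt{\Sigma_n}$ (rather than by a deterministic sequence) is exactly what removes the $\Sigma$-dependence. A secondary technical point is to show, a.s., that the $a^2S_k^2/k^2$ term in $\langle M\rangle_n$ is of lower order; the second-moment recursion above, supplemented for the LIL by higher-moment bounds on $S_k$ (obtained from an analogous recursion and Borel--Cantelli along a geometric subsequence), suffices for this.
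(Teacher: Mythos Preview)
The paper does not give a proof of this theorem: it is quoted from Bercu~\cite{Bercu22} as background for the new results (Theorems~\ref{thm:asrangeERW}, \ref{thm:Taniguchi23ThmA}, \ref{thm:Taniguchi23ThmB}), so there is no in-paper argument to compare against.

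That said, your sketch reproduces precisely the strategy of Bercu's original proof in~\cite{Bercu22}: one forms the martingale $M_n=a_nS_n$ with $a_n=\Gamma(n)\Gamma(1+a)/\Gamma(n+a)$, computes $\langle M\rangle_n=\sum_{k\le n-1}a_{k+1}^2\bigl(b\Sigma_k/k-a^2S_k^2/k^2\bigr)$, and reads off the trichotomy from the exponent $b-1-2a$. In the superdiffusive regime $\langle M\rangle_\infty\in L^1$ gives $L^2$-convergence of $M_n$ and hence of $S_n/n^a$; in the (sub)critical regimes one applies the martingale CLT with stable convergence and Stout's LIL, the key point---which you identify correctly---being that normalizing by $\sqrt{\Sigma_n}$ (resp.\ $\sqrt{\Sigma_n\log\Sigma_n}$) cancels the random factor $\sqrt{\Sigma}$ that appears in the limit of $\langle M\rangle_n$ via~\eqref{eq:ERWSSigmanLimit}. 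Your treatment of the lower-order term $a^2S_k^2/k^2$ via the second-moment recursion is also the route taken in~\cite{Bercu22}. In short, there is no discrepancy to report: the paper simply cites the result, and your outline is the standard (and correct) proof.
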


\begin{remark} \label{rem:Remark!}
This theorem implies that $\{S_n\}$ is recurrent [resp. transient] when $a \in [-b, b/2]$ [resp. $a\in (b/2,b]$]:
More precisely, $S_n=0$ for infinitely many $n$ with probability one if $a \in [-b, b/2]$, while $S_n=0$ for only finitely many $n$ with positive probability if $a\in (b/2,b]$.
For the standard ERW (i.e. $b=1$), it is shown in \cite{GuerinLaulinRaschel23} that %the limit $L$ in Theorem \ref{thm:Bercu22CLTLIL} (iii) satisfies 
$P(L\neq 0)=1$.
\end{remark}

In this note, we carry out a further study on the phase transition concerning the long time behavior of the ERWS. Our first result in subsection \ref{ss:asrangeERWS} shows that the range of the ERWS admits phase transition. To our best knowledge, this is the first result on the range of all versions of elephant random walks. In subsection \ref{ss:CorrERWS}, we show that the correlation coefficient of $S_n$ and $\Sigma_n$
%between the position at time $n$ and the number of moves up to time $n$ 
exhibits a non-trivial phase transition when $r \in (0,1)$ i.e. the walker can stay on his own position with positive probability, and the number $\Sigma_n$ of moves up to time $n$ is random. 
Note that when $r=0$ the model is nothing but the original ERW, and the correlation coefficient of $S_n$ and $\Sigma_n \equiv n$ is zero for all $a \in [-1,1]$. Proofs are given in Sections \ref{sec:proofsA},   \ref{sec:proofsB1} and  \ref{sec:proofsB2}.

\section{Main results}

\subsection{Almost sure range of the elephant random walk with stops}
\label{ss:asrangeERWS}

For  the ERWS $\{S_n\}$, we define the {\it range} $\{R_n\}$ of by $R_0:=1$ and $R_n:=\#\{S_0,S_1,\ldots,S_n\}$ for $n \in \mathbb{N}$.  Combining the strong law of large numbers for $\{S_n\}$ obtained by \cite{Bercu22} and Theorem 2 in \cite{BoudabraMarkowsky21}, we have $\displaystyle \lim_{n \to \infty} \dfrac{R_n}{n}=0$ a.s. for any $b \in (0,1]$ and $a \in [-b,b]$.
The following theorem is a considerable improvement, which shows that the range admits phase transition at $a=a_c(b)=b/2$.

%In , i that the almost sure range of ERWS admits phase transition.
%To our best knowledge, this is the first result on the range of ERW.

\begin{theorem} \label{thm:asrangeERW} Assume that $b \in (0,1]$. %\[ R_0:=1 \quad \mbox{and} \quad R_n:=\#\{S_0,S_1,\ldots,S_n\} \quad \mbox{for $n \in \mathbb{N}$.} \]
%Let $\phi(x):= \sqrt{2x\log \log x} $. 
\begin{itemize}
\item[(i)] If $a \in [-b,b/2)$ then
$\displaystyle \sqrt{\dfrac{b}{b-2a}} \leq \limsup_{n \to \infty} \dfrac{R_n}{\phi(\Sigma_n)} \leq 2\sqrt{\dfrac{b}{b-2a}}$ a.s..
\item[(ii)] If $a=b/2$ then
$\displaystyle 1 \leq \limsup_{n \to \infty} \dfrac{R_n}{\phi(\Sigma_n\log \Sigma_n)} \leq 2$ a.s..
\item[(iii)] If $a \in (b/2,b]$, then 
$\displaystyle \lim_{n \to \infty} \dfrac{R_n}{n^a} = |L|$ a.s.,
where $L$ is the limiting random variable in Theorem \ref{thm:Bercu22CLTLIL} (iii). %\eqref{eq:Bercu22JSP(3.18)}.
\end{itemize}
\end{theorem}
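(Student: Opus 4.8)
The plan is to control $R_n$ from above and below by the running maximum and minimum of $\{S_n\}$, and then invoke Theorem \ref{thm:Bercu22CLTLIL}. Set $M_n := \max_{0 \leq k \leq n} S_k$ and $m_n := \min_{0 \leq k \leq n} S_k$. Since $\{S_n\}$ changes by at most one at each step (indeed $S_{k+1}-S_k \in \{-1,0,+1\}$), the walk visits every integer between $m_n$ and $M_n$, so $R_n = M_n - m_n + 1$. The two-sided bound $\max\{M_n, -m_n\} \leq M_n - m_n + 1 \leq 2\max\{M_n, -m_n\} + 1$ (using $M_n \geq 0 \geq m_n$) then reduces everything to the asymptotics of $M_n \vee (-m_n)$.

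In the subcritical and critical regimes, the idea is that $M_n$, $-m_n$, and $\limsup$-type quantities are all governed by the same law of the iterated logarithm. First I would show that for a process satisfying $\limsup_n S_n/\psi(\Sigma_n) = c$ and $\liminf_n S_n/\psi(\Sigma_n) = -c$ a.s. (with $\psi = \phi$ in case (i), $\psi = \phi(\cdot\,\log\,\cdot)$ in case (ii), and $c$ the relevant constant), one has $\limsup_n M_n/\psi(\Sigma_n) = c$ and $\limsup_n (-m_n)/\psi(\Sigma_n) = c$ a.s.; this is a soft argument using that $\Sigma_n \to \infty$ and $\psi$ is regularly varying, so that $\psi(\Sigma_n)$ is eventually monotone and slowly varying relative to its own fluctuations. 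Hence $\limsup_n (M_n \vee (-m_n))/\psi(\Sigma_n) = c$ a.s., and combined with $R_n = M_n - m_n + 1$ and the sandwich above, $c \leq \limsup_n R_n/\psi(\Sigma_n) \leq 2c$ a.s., which is exactly (i) and (ii). The main subtlety here is that $M_n$ and $-m_n$ need not achieve their large values simultaneously, which is precisely why we only get the interval $[c, 2c]$ rather than a single constant; I would not expect to be able to close this gap by soft arguments.

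In the supercritical regime $a \in (b/2, b]$, Theorem \ref{thm:Bercu22CLTLIL}(iii) gives $S_n/n^a \to L$ a.s. Then $M_n/n^a \to \max\{L, 0\} = L^+$ and $-m_n/n^a \to \max\{-L,0\} = L^-$ a.s. (again because $n^a$ is increasing and regularly varying, so a convergent rescaled sequence forces convergence of its running extrema). Therefore $R_n/n^a = (M_n - m_n + 1)/n^a \to L^+ + L^- = |L|$ a.s., which is (iii). I expect the bulk of the work to be the lemma in the previous paragraph relating $\limsup$ of a regularly-varying-normalized process to the $\limsup$ of its running maximum; once that is in place, all three cases follow by bookkeeping, using that $\Sigma_n \to \infty$ a.s. by \eqref{eq:ERWSSigmanLimit} so that the normalizations indeed diverge.
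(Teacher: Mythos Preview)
Your proposal is correct and is essentially the same argument as the paper's: the paper isolates the deterministic content as a separate lemma (an extension of a result of Boudabra--Markowsky) about integer-valued nearest-neighbor sequences $\{x_n\}$ with normalizer $f(n)$ increasing to infinity, showing that $\limsup_n \pm x_n/f(n)=1$ forces $1\le \limsup_n r_n/f(n)\le 2$ and $x_n/f(n)\to c$ forces $r_n/f(n)\to|c|$, and then applies this pathwise with $f(n)=\phi(\Sigma_n)$, $\phi(\Sigma_n\log\Sigma_n)$, or $n^a$. Your running-max/min formulation $R_n=M_n-m_n+1$ and the sandwich $\max\{M_n,-m_n\}\le R_n\le 2\max\{M_n,-m_n\}+1$ is just a repackaging of the same set-inclusion argument; note that you only need monotonicity of $\psi(\Sigma_n)$ (which holds since $\Sigma_n$ is nondecreasing and $\psi$ is eventually increasing), not regular variation.
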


\subsection{Correlation between the position and the number of moves}
\label{ss:CorrERWS}

In this subsection, we assume that $b=1-r \in (0,1)$, and thus the number of moves $\Sigma_n$ up to time $n$ is a nondegenerate random variable. %genuinely random. 
Our interest is how big the correlation between $S_n$ and $\Sigma_n$ is, and the next theorem implies that it is drastically changed at $a=a_c(b)=b/2$. 

The correlation coefficient of $X$ and $Y$ is denoted by $\rho[X,Y]$. The beta function is defined by
\begin{align*}
B(x,y) &:= \int_0^1 u^{x-1} (1-u)^{y-1}\,du \quad \mbox{for $x,y>0$.} 
\end{align*}
%{\color{blue} Recall that the beta function and the gamma function are defined by
%\begin{align*}
%B(x,y) &:= \int_0^1 u^{x-1} (1-u)^{y-1}\,du \quad \mbox{for $x,y>0$,} \\
%\Gamma(x) &:= \int_0^{\infty} u^{x-1}e^{-u}\,du  \quad \mbox{for $x>0$}
%\end{align*}
%and satisfy that $B(x,y)=\dfrac{\Gamma(x)\Gamma(y)}{\Gamma(x+y)}$.
%}

\begin{theorem} \label{thm:Taniguchi23ThmA} Assume that $b \in (0,1)$. If $a=0$ or $s=1/2$, then $\rho [S_n,\Sigma_n]=0$ for all $n \in \mathbb{N}$.
%\begin{align} %\Cov
%\rho [S_n,\Sigma_n]=0
%\quad \mbox{for all $n \in \mathbb{N}$.}
%\label{eq::Taniguchi23ThmA_1}
%\end{align}
If $a \neq 0$ and $s \neq 1/2$, then as $n \to \infty$, 
\begin{align*}
\rho[S_n,\Sigma_n]\sim 
\begin{cases}
(2s-1) \cdot  P_{a,b}n^{-(b-2a)/2}&\mbox{if $a \in [-b,b/2)$,} \\
(2s-1) \cdot  Q_a (\log n)^{-1/2}&\mbox{if $a=b/2$,}\\
(2s-1) \cdot R_{a,b,s} &\mbox{if $a \in (b/2,b]$.}\\
\end{cases}
\end{align*}
%For $a \in [-b,b/2)$, the constant $P_{a,b}$ is defined by
%\begin{align*}
%P_{a,b} &:= \dfrac{\sqrt{(b-2a)\Gamma(b)} \cdot \{ (a+b) \cdot B(a+1,b)-1 \}}{\Gamma(a+1) \cdot \sqrt{b \cdot B(b,b)-1}},
%\end{align*}
%which is positive [resp. negative] for $a \in (0,b/2)$ [resp. $a \in [-b,0)$].
Here $P_{a,b}$, $Q_a$ and $R_{a,b,s}$ are constants given by
\begin{align*}
P_{a,b} &= \dfrac{\sqrt{(b-2a)\Gamma(b)} \cdot \{ (a+b) \cdot B(a+1,b)-1 \}}{\Gamma(a+1) \cdot \sqrt{b \cdot B(b,b)-1}}, \\
Q_a &= \dfrac{\sqrt{\Gamma(2a)} \cdot \{ a \cdot B(a,2a)-1 \}}{\Gamma(a+1) \cdot \sqrt{2a \cdot B(2a,2a)-1}}, \\
R_{a,b,s}&= \dfrac{\sqrt{2a-b} \cdot \{ a \cdot B(a,b)-1 \}}{\sqrt{\{a^2 \cdot B(a,a)- (2s-1)^2 \cdot (2a-b)\}\{b \cdot B(b,b)-1\}}}.
\end{align*}
Note that $P_{a,b}$ is positive [resp. negative] for $a \in (0,b/2)$ [resp. $a \in [-b,0)$] while $Q_a$ and $R_{a,b,s}$ are positive.
\end{theorem}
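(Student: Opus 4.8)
\emph{Strategy.} The plan is to compute the first two moments of $S_n$ and of $\Sigma_n$, together with the mixed moment $E[S_n\Sigma_n]$, \emph{exactly} by one-step conditioning, and then to read off the asymptotics of $\rho[S_n,\Sigma_n]=\Cov[S_n,\Sigma_n]/\sqrt{\mathrm{Var}[S_n]\,\mathrm{Var}[\Sigma_n]}$. Writing $X_{n+1}=\eta_{n+1}X_{U_n}$, where $\eta_{n+1}\in\{-1,0,1\}$ is independent of $\mathcal F_n$ and of $U_n$ with $E[\eta_{n+1}]=a$ and $E[\eta_{n+1}^2]=b$, one has $E[X_{n+1}\mid\mathcal F_n]=aS_n/n$ and $E[X_{n+1}^2\mid\mathcal F_n]=b\Sigma_n/n$; together with $X_{n+1}^3=X_{n+1}$ and $X_{n+1}^4=X_{n+1}^2$ this gives
\[
E[S_{n+1}^2\mid\mathcal F_n]=\Bigl(1+\tfrac{2a}{n}\Bigr)S_n^2+\tfrac{b}{n}\Sigma_n,\qquad
E[\Sigma_{n+1}^2\mid\mathcal F_n]=\Bigl(1+\tfrac{2b}{n}\Bigr)\Sigma_n^2+\tfrac{b}{n}\Sigma_n,
\]
\[
E[S_{n+1}\Sigma_{n+1}\mid\mathcal F_n]=\Bigl(1+\tfrac{a+b}{n}\Bigr)S_n\Sigma_n+\tfrac{a}{n}S_n .
\]
Taking expectations turns each of these into a first-order linear recursion with Gamma-type coefficients, with initial data $E[S_1]=E[S_1\Sigma_1]=2s-1$ and $E[\Sigma_1]=E[S_1^2]=E[\Sigma_1^2]=1$.

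\emph{Solving the recursions.} Using the integrating factors $\gamma_n:=\Gamma(n+a)/\{\Gamma(n)\Gamma(1+a)\}$ and $\beta_n:=\Gamma(n+b)/\{\Gamma(n)\Gamma(1+b)\}$ and their counterparts with $2a$, $2b$, $a+b$ in place of $a$, one obtains $E[S_n]=(2s-1)\gamma_n$, $E[\Sigma_n]=\beta_n$, and closed forms for the three second moments, each equal to a constant times the relevant integrating factor plus a telescoped sum. Every sum occurring has the form $\sum_{k\ge1}\Gamma(k+\alpha)/\Gamma(k+1+\alpha+\beta)$, which I would evaluate by writing $\Gamma(k+\alpha)/\Gamma(k+1+\alpha+\beta)=\Gamma(1+\beta)^{-1}\int_0^1 t^{k+\alpha-1}(1-t)^\beta\,dt$ and summing the geometric series, obtaining $\sum_{k\ge1}\Gamma(k+\alpha)/\Gamma(k+1+\alpha+\beta)=B(\alpha+1,\beta)/\Gamma(1+\beta)$. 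Substituting back and simplifying with $(a+b)B(a+1,b)=aB(a,b)$ and $\Gamma(1+x)=x\Gamma(x)$ yields, as $n\to\infty$, the \emph{regime-independent} estimates
\[
\Cov[S_n,\Sigma_n]\sim(2s-1)\,\frac{aB(a,b)-1}{\Gamma(1+a)\Gamma(1+b)}\,n^{a+b},
\]
\[
\mathrm{Var}[\Sigma_n]\sim\frac{bB(b,b)-1}{\Gamma(1+b)^2}\,n^{2b}
\]
(for $a\le0$ one keeps the equivalent form $(a+b)B(a+1,b)-1$ in the numerator). The behaviour of $\mathrm{Var}[S_n]$, by contrast, is dictated by the sign of $b-2a$ in the sum for $E[S_n^2]$: for $a<b/2$ that sum is governed by its tail and $E[S_n]^2=O(n^{2a})$ is negligible, so $\mathrm{Var}[S_n]\sim b\,n^b/\{(b-2a)\Gamma(1+b)\}$; for $a=b/2$ the sum diverges like $b\log n$, so $\mathrm{Var}[S_n]\sim b\,n^b\log n/\Gamma(1+b)$; and for $a>b/2$ the sum converges, so $E[S_n^2]\sim 2a\,n^{2a}/\{(2a-b)\Gamma(1+2a)\}$ has the \emph{same order} as $E[S_n]^2\sim(2s-1)^2 n^{2a}/\Gamma(1+a)^2$, and their difference gives $\mathrm{Var}[S_n]\sim\{a^2B(a,a)-(2s-1)^2(2a-b)\}\,n^{2a}/\{(2a-b)\Gamma(1+a)^2\}$. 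Inserting these into $\rho[S_n,\Sigma_n]$ and collapsing the Gamma and Beta factors produces the three stated asymptotics with the constants $P_{a,b}$, $Q_a$, $R_{a,b,s}$.

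\emph{Degenerate cases and positivity.} When $s=1/2$, the joint law of $(S_n,\Sigma_n)$ is invariant under $S_n\mapsto-S_n$ (flip every $X_i$; this leaves $\Sigma_n$ fixed), so $E[S_n]=E[S_n\Sigma_n]=0$; when $a=0$, the forcing term $(a/n)S_n$ disappears from the recursion for $E[S_n\Sigma_n]$ and, since $E[S_n]=2s-1$, its solution is $E[S_n\Sigma_n]=(2s-1)\beta_n=E[S_n]E[\Sigma_n]$. In both cases $\Cov[S_n,\Sigma_n]\equiv0$. For the positivity claims: $bB(b,b)-1$ is a positive multiple of $\lim_n\mathrm{Var}[\Sigma_n]/n^{2b}=\mathrm{Var}[\Sigma]$, which is positive because $\Sigma$ is non-degenerate for $b\in(0,1)$; $a^2B(a,a)-(2s-1)^2(2a-b)$ equals $(2a-b)\Gamma(1+a)^2$ times $\lim_n\mathrm{Var}[S_n]/n^{2a}$, which is positive because $M_n:=S_n/\gamma_n$ is an $L^2$-bounded martingale whose variance is nondecreasing in $n$ and strictly positive already for $n\ge2$; and $aB(a,b)-1>0$ for $a>0$, $b<1$ (and similarly $aB(a,2a)-1>0$, and the sign of $(a+b)B(a+1,b)-1$ on $(0,b/2)$ as opposed to $[-b,0)$) follows from $f(x):=\log\Gamma(1+x)+\log\Gamma(b)-\log\Gamma(x+b)$ having $f(0)=0$ and $f'(x)=\psi(1+x)-\psi(x+b)>0$ whenever $b<1$.

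\emph{Main obstacle.} The principal difficulty is not any single step but the careful, case-by-case asymptotic analysis of the solved recursions --- in particular pinning down the \emph{exact} constant (not merely the order $n^{2a}$) in $\mathrm{Var}[S_n]$ in the supercritical regime, where a genuine cancellation between $E[S_n^2]$ and $E[S_n]^2$ takes place and the $(2s-1)^2$ contribution survives --- together with the somewhat delicate Gamma/Beta manipulations required to recognise the assembled quantities as $P_{a,b}$, $Q_a$ and $R_{a,b,s}$.
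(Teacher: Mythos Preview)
Your proposal is correct and follows essentially the same route as the paper: compute $E[S_n]$, $E[\Sigma_n]$, $E[S_n^2]$, $E[\Sigma_n^2]$ and $E[S_n\Sigma_n]$ from the one-step recursions, extract their asymptotics via $c_n(x)=\Gamma(n+x)/\{\Gamma(n)\Gamma(x+1)\}\sim n^x/\Gamma(x+1)$, and combine. The only differences are cosmetic --- you rederive from scratch the moment formulas that the paper cites from Bercu \cite{Bercu22}, you evaluate the convergent sums via the Beta-integral representation rather than the telescoping identity $\sum_{k=1}^{n-1}c_k(x)/\{k\,c_{k+1}(y)\}=(c_n(x)/c_n(y)-1)/(x-y)$, and your sign arguments use the monotonicity of the digamma function where the paper uses direct bounds on $B(a+1,b)$; none of this changes the structure of the proof.
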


By Theorem \ref{thm:Taniguchi23ThmA}, the correlation between $S_n$ and $\Sigma_n$ vanishes as $n \to \infty$ when $a \in [-b, b/2]$, while it remains positive when $a\in (b/2,b]$. This is in line with our expectations, in view of the recurrence property of the ERWS, described in Remark \ref{rem:Remark!}. The (sub)critical ERWS is recurrent and after having completed an excursion (that is when $S_n = 0$) the walk has the same number of $+1$ and $-1$ steps in its history. Hence the probability of moving to $+1$ is the same as going to $-1$ independently of the past, that is, the signs of the excursions are determined by independent coin flips. It shows that in these regimes, the random sign of $S_n$ causes cancellation in a way which makes $S_n$ asymptotically independent of $\Sigma_n$. In the supercritical regime, the walk is transient and after the last return to $0$, the more non-zero steps the walk takes the further it gets away from $0$.

On the other hand, the next theorem says that $S_n^2$ and $\Sigma_n$ has %asymptotically 
a positive correlation for all $a \in [-b,b]$, which means that conditioning on $\Sigma_n$ gives non-trivial information on $S_n^2$ and hence on $|S_n|$.
%as in the next theorem. 
%One of our future problem is to obtain an intuitive explanation for it.

\begin{theorem} \label{thm:Taniguchi23ThmB} For $b\in (0,1)$,\begin{align}
\lim_{n \to \infty} \rho[S_n^2,\Sigma_n] =
\begin{cases}
Q'_{b} &\mbox{if $a \in [-b,b/2]$,} \\
R'_{a,b} &\mbox{if $a \in (b/2,b]$.} 
\end{cases}
\label{eq::Taniguchi23ThmB_2}
\end{align}
Here $Q'_b$ and $R'_{a,b}$ are positive constants given by
\begin{align*}
Q'_b= \sqrt{\dfrac{b \cdot B(b,b)-1}{3b \cdot B(b,b)-1}}, \
R'_{a,b}=\dfrac{2a \cdot B(2a,b)-1}{\sqrt{\{\frac{6(2a^2+2ab-b^2)}{4a-b} \cdot B(2a,2a)-1\}\{b \cdot B(b,b)-1\}}}.
\end{align*}
\end{theorem}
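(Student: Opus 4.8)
The plan is to reduce the statement to the large-$n$ asymptotics of five deterministic moment sequences and then assemble the correlation coefficient. Writing $\mathcal{F}_n$ for the natural filtration and using $E[X_{n+1}\mid\mathcal{F}_n]=aS_n/n$ (immediate from \eqref{def:ERWwithStopsNextsteps}) together with the already noted $E[X_{n+1}^2\mid\mathcal{F}_n]=b\Sigma_n/n$ and the identities $X_{n+1}^3=X_{n+1}$, $X_{n+1}^4=X_{n+1}^2$ (valid since $X_{n+1}\in\{-1,0,1\}$), one expands $\Sigma_{n+1}=\Sigma_n+X_{n+1}^2$ and $S_{n+1}^2=(S_n+X_{n+1})^2$ and takes conditional and then full expectations. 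With all five sequences equal to $1$ at $n=1$, this gives the exact linear recursions
\begin{align*}
E[\Sigma_{n+1}]&=\Bigl(1+\tfrac{b}{n}\Bigr)E[\Sigma_n],\\
E[\Sigma_{n+1}^2]&=\Bigl(1+\tfrac{2b}{n}\Bigr)E[\Sigma_n^2]+\tfrac{b}{n}\,E[\Sigma_n],\\
E[S_{n+1}^2]&=\Bigl(1+\tfrac{2a}{n}\Bigr)E[S_n^2]+\tfrac{b}{n}\,E[\Sigma_n],\\
E[S_{n+1}^2\Sigma_{n+1}]&=\Bigl(1+\tfrac{b+2a}{n}\Bigr)E[S_n^2\Sigma_n]+\tfrac{2a}{n}\,E[S_n^2]+\tfrac{b}{n}\,E[\Sigma_n^2]+\tfrac{b}{n}\,E[\Sigma_n],\\
E[S_{n+1}^4]&=\Bigl(1+\tfrac{4a}{n}\Bigr)E[S_n^4]+\tfrac{6b}{n}\,E[S_n^2\Sigma_n]+\tfrac{4a}{n}\,E[S_n^2]+\tfrac{b}{n}\,E[\Sigma_n].
\end{align*}
The system is triangular: $E[\Sigma_n]$ is autonomous; $E[\Sigma_n^2]$ and $E[S_n^2]$ are each driven only by $E[\Sigma_n]$; the cross moment $E[S_n^2\Sigma_n]$ is driven by the first three; and $E[S_n^4]$ is driven by $E[S_n^2\Sigma_n]$ and $E[S_n^2]$.

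Each recursion is solved by the integrating factor $\gamma_n^{(\theta)}:=\prod_{k=1}^{n-1}\bigl(1+\theta/k\bigr)=\Gamma(n+\theta)/(\Gamma(n)\Gamma(1+\theta))$ with the relevant $\theta\in\{b,2b,2a,b+2a,4a\}$, so that each moment equals $\gamma_n^{(\theta)}$ times $1$ plus a sum $\sum_{k<n}(\text{lower moments})_k/(k\,\gamma_{k+1}^{(\theta)})$. I would then read off asymptotics from $\Gamma(n+\alpha)/\Gamma(n+\beta)\sim n^{\alpha-\beta}$: a tail sum $\sum_k\Gamma(k+\beta)/\Gamma(k+\gamma)$ either converges, in which case it equals $\frac{\Gamma(\beta)}{\Gamma(\gamma)}\,{}_2F_1(1,\beta;\gamma;1)=\frac{\Gamma(\beta)\Gamma(\gamma-1-\beta)}{\Gamma(\gamma-1)\Gamma(\gamma-\beta)}$ by Gauss's summation ($\gamma-1-\beta>0$), which is precisely what manufactures the values $B(b,b)$, $B(2a,b)$, $B(2a,2a)$ in the statement, or it diverges, in which case I keep its leading power, with an extra factor $\log n$ generated at a resonance (when two of the relevant exponents coincide). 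Comparing the exponents $b$, $2b$, $2a$, $b+2a$, $4a$ across the regimes $a<b/2$, $a=b/2$, $a>b/2$ gives $E[S_n^2]\asymp n^{b},\,n^{b}\log n,\,n^{2a}$; $E[S_n^2\Sigma_n]\asymp n^{2b},\,n^{2b}\log n,\,n^{b+2a}$; $E[S_n^4]\asymp n^{2b},\,n^{2b}(\log n)^2,\,n^{4a}$; while always $E[\Sigma_n]\sim n^{b}/\Gamma(1+b)$ and $\mathrm{Var}(\Sigma_n)\sim(b\,B(b,b)-1)\,n^{2b}/\Gamma(1+b)^2$.

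With these in hand I substitute into
\[
\rho[S_n^2,\Sigma_n]=\frac{E[S_n^2\Sigma_n]-E[S_n^2]\,E[\Sigma_n]}{\sqrt{\bigl(E[S_n^4]-E[S_n^2]^2\bigr)\bigl(E[\Sigma_n^2]-E[\Sigma_n]^2\bigr)}}
\]
and simplify. In the supercritical regime $a\in(b/2,b]$ every sequence has pure power growth, numerator and denominator are both of exact order $n^{b+2a}$, the limit of the numerator is positive, and the surviving coefficients combine into $R'_{a,b}$. In the subcritical and critical regimes the numerator, $E[S_n^4]$ and $E[S_n^2]^2$ all carry the same $a$-dependent prefactor -- heuristically because Bercu's central limit theorem (Theorem \ref{thm:Bercu22CLTLIL}) forces $S_n^2\approx c_{a,b}\,\Sigma_n\,\xi^2$ with $\xi\sim N(0,1)$ asymptotically independent of $\Sigma_n$ and $c_{a,b}$ deterministic ($b/(b-2a)$, respectively $\sim b\log n$) -- so, using $E[\xi^2]=1$ and $E[\xi^4]=3$, the limit collapses to $\sqrt{\mathrm{Var}(\Sigma_n)/(3E[\Sigma_n^2]-E[\Sigma_n]^2)}\to\sqrt{(b\,B(b,b)-1)/(3b\,B(b,b)-1)}=Q'_b$, with no residual dependence on $a$. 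As a consistency check, $R'_{a,b}\to Q'_b$ as $a\downarrow b/2$, so the two formulae agree at the phase boundary.

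The main obstacle is the bookkeeping in solving the nested recursions, above all the one for $E[S_n^4]$, whose forcing term $(6b/n)E[S_n^2\Sigma_n]$ must first be fed the fully solved cross moment -- including, at $a=b/2$, its logarithmic correction -- and propagating the exact constants through this composition of a Gauss evaluation with a divergent-tail estimate is what produces the somewhat opaque factor $\tfrac{6(2a^2+2ab-b^2)}{4a-b}$ in $R'_{a,b}$. A secondary nuisance is the degenerate coincidences among the exponents $\{b,2b,2a,b+2a,4a\}$ (for instance $4a=b+2a=2b$ at $a=b/2$, or $4a=b+2a$ at $a=b$), which must be treated as separate cases carrying their own logarithmic factors; since these all cancel in the final ratio, the limit is nonetheless the clean expression stated, and positivity of $Q'_b$ and $R'_{a,b}$ follows from the positivity of the limiting numerator together with $b\,B(b,b)>1$.
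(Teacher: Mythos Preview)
Your proposal is correct and follows essentially the same route as the paper: derive the triangular system of moment recursions, solve each with the integrating factor $\gamma_n^{(\theta)}=\Gamma(n+\theta)/(\Gamma(n)\Gamma(1+\theta))$, extract the leading asymptotics regime by regime, and assemble $\rho[S_n^2,\Sigma_n]$. The only tactical difference is that the paper evaluates the resulting sums via the exact telescoping identity $\sum_{k=1}^{n-1}\dfrac{c_k(x)}{k\,c_{k+1}(y)}=\dfrac{1}{x-y}\Bigl(\dfrac{c_n(x)}{c_n(y)}-1\Bigr)$ (valid for $x\neq y$), which yields closed-form expressions for every $n$ before passing to the limit, rather than invoking Gauss's ${}_2F_1$ summation for the convergent tails.
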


The limit in \eqref{eq::Taniguchi23ThmB_2} can be regard as a function of $a \in [-b,b]$ for fixed $b \in (0,1)$: See Figure \ref{graph:ATTGraph}.

\begin{figure}
\begin{tabular}{c}
\includegraphics[keepaspectratio,scale=0.40]{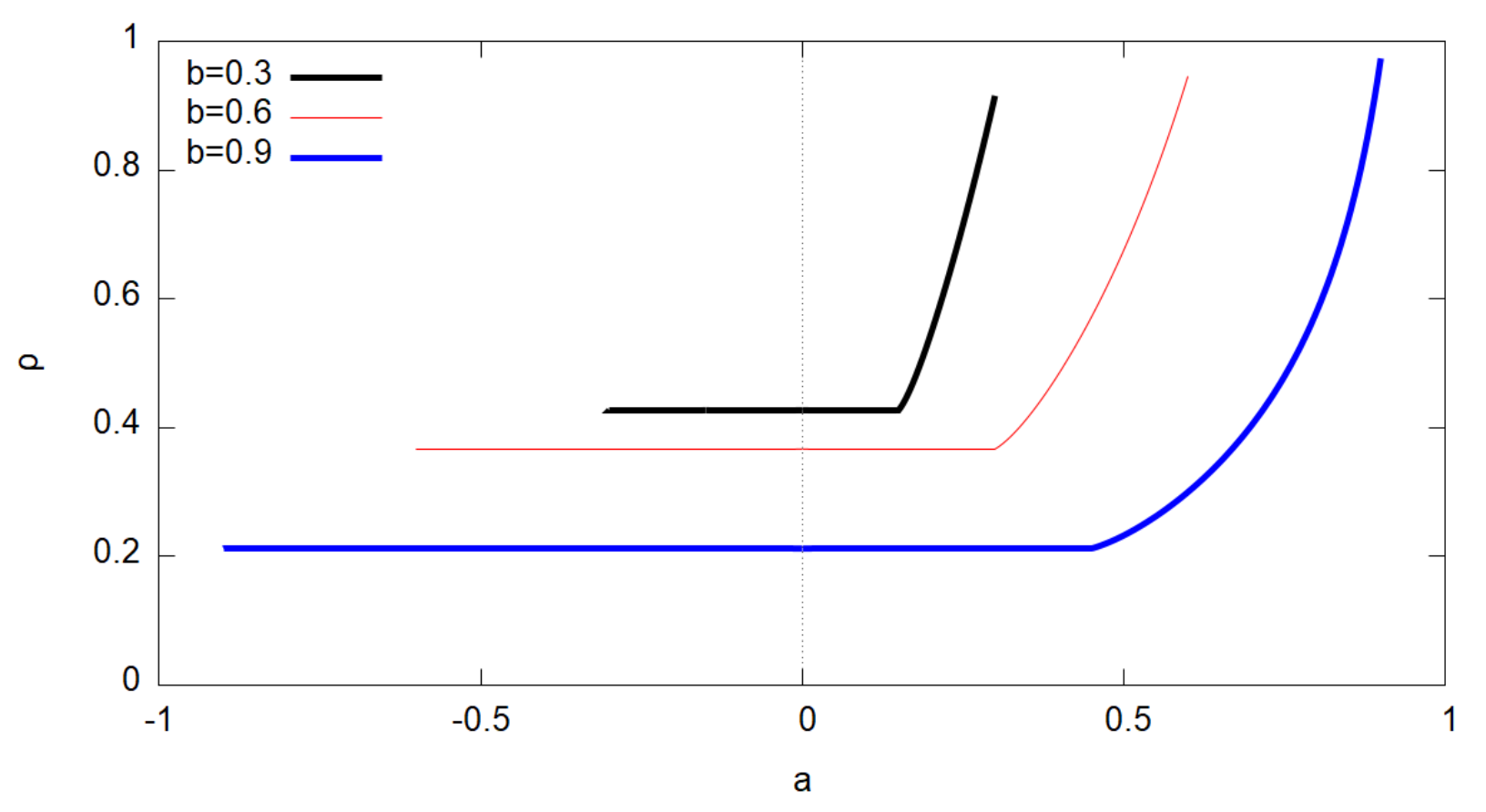}
\end{tabular}
\caption{The graphs of $\displaystyle \rho=\lim_{n \to \infty} \rho[S_n^2,\Sigma_n]$ as a function of $a \in [-b,b]$ for $b=0.3$, $0.6$ and $0.9$. Note that the critical value is $a_c(b)=b/2$.}
\label{graph:ATTGraph}
\end{figure}

\section{Proof of Theorem \ref{thm:asrangeERW}}

\label{sec:proofsA}

Theorem \ref{thm:asrangeERW} is a consequence of Theorem \ref{thm:Bercu22CLTLIL} and the following lemma, which is an extension of Theorem 2 in Boudabra and Markowsky \cite{BoudabraMarkowsky21}, and is of independent interest. 
%, together with \eqref{eq:Bercu22JSP(3.4)}, \eqref{eq:Bercu22JSP(3.12)}, and \eqref{eq:Bercu22JSP(3.18)}.

%210610 / 220805 revival
\begin{lemma} \label{lem:BoudabraMarkowsky21extended} Let $\{x_n\}$ be an integer valued sequence with $x_n-x_{n-1} \in \{-1,0,+1\}$ for each $n \in \mathbb{N}$.
%\[ \mbox{$x_n-x_{n-1} \in \{-1,0,+1\}$ for each $n \in \mathbb{N}$.} \]
We define $\{r_n\}$ by 
$r_0:=1$ and $r_n:=\#\{x_0,x_1,\ldots,x_n\}$ for $n \in \mathbb{N}$.
%\[ r_0:=1 \quad \mbox{and} \quad r_n:=\#\{x_0,x_1,\ldots,x_n\} \quad \mbox{for $n \in \mathbb{N}$.} \]
Let $f(n)$ be a positive real valued function defined for all sufficiently large $n \in \mathbb{Z}_+:=\{0,1,2,\ldots\}$, which is increasing and satisfies that $\displaystyle \lim_{n \to \infty} f(n)=+\infty$.
\begin{itemize}
\item[(i)] If $\displaystyle \lim_{n \to \infty} \dfrac{x_n}{f(n)}=c$, then
$\displaystyle \lim_{n \to \infty} \dfrac{r_n}{f(n)}=|c|$.
\item[(ii)] If $\displaystyle \limsup_{n \to \infty} \pm \dfrac{x_n}{f(n)}=1$, 
%\[ 
%\displaystyle \limsup_{n \to \infty} \dfrac{x_n}{f(n)}=1,\quad  \liminf_{n \to \infty} \dfrac{x_n}{f(n)}=-1
%\]
then $\displaystyle 1 \leq \limsup_{n \to \infty} \dfrac{r_n}{f(n)} \leq 2$.
\end{itemize}
\end{lemma}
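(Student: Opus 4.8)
The plan is to reduce the whole statement to the elementary fact that the range of a lazy nearest-neighbour path is a block of consecutive integers. Setting $M_n:=\max_{0\le k\le n}x_k$ and $m_n:=\min_{0\le k\le n}x_k$, I would first prove $\{x_0,\ldots,x_n\}=\{m_n,m_n+1,\ldots,M_n\}$ — hence $r_n=M_n-m_n+1$ — by induction on $n$: because $x_{n+1}-x_n\in\{-1,0,+1\}$, the visited set can gain a new point in the step $n\to n+1$ only when $x_{n+1}=M_n+1$ (which forces $x_n=M_n$) or $x_{n+1}=m_n-1$ (which forces $x_n=m_n$), and in either case the enlarged set is again an interval. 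After replacing $x_n$ by $x_n-x_0$ (which changes neither $r_n$ nor the hypotheses, since it shifts $x_n/f(n)$ by $x_0/f(n)\to0$) I may assume $x_0=0$, so that $m_n\le 0\le M_n$ for all $n$.

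The problem then becomes one about the running extrema, and I would isolate two soft facts using only that $f$ is (eventually) nondecreasing with $f(n)\to\infty$. (a) Since $M_n\ge x_n$ and $m_n\le x_n$, one has $\liminf_n M_n/f(n)\ge\liminf_n x_n/f(n)$ and $\limsup_n(-m_n)/f(n)\ge\limsup_n(-x_n)/f(n)$. (b) If $\limsup_n x_n/f(n)\le L$ for some $L\ge0$, then for every $\varepsilon>0$ there is $N$ with $x_k\le(L+\varepsilon)f(k)\le(L+\varepsilon)f(n)$ for $N\le k\le n$, while $\max_{k<N}x_k$ is a constant eventually dominated by $(L+\varepsilon)f(n)$; hence $M_n\le(L+\varepsilon)f(n)$ for large $n$, so $\limsup_n M_n/f(n)\le L$ (and symmetrically for $-m_n$).

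For (i) I would first use the symmetry $x_n\mapsto-x_n$ (which leaves $r_n$ unchanged) to assume $c\ge0$; then (a) and (b) with $L=c$ give $M_n/f(n)\to c$. If $c>0$ then $x_k=(x_k/f(k))f(k)\to+\infty$, so $x_k>0$ for all large $k$ and $m_n$ is eventually constant, giving $m_n/f(n)\to0$; if $c=0$ then (b) applied to $-x_n$ with $L=0$, together with $-m_n\ge0$, forces $-m_n/f(n)\to0$. Either way $r_n/f(n)=(M_n-m_n+1)/f(n)\to c=|c|$. For (ii), the hypothesis means $\limsup_n x_n/f(n)=\limsup_n(-x_n)/f(n)=1$; (b) then gives $\limsup_n M_n/f(n)\le1$ and $\limsup_n(-m_n)/f(n)\le1$, so $\limsup_n r_n/f(n)\le\limsup_n M_n/f(n)+\limsup_n(-m_n)/f(n)+\lim_n 1/f(n)\le2$, while choosing $n_j\to\infty$ with $x_{n_j}/f(n_j)\to1$ yields $r_{n_j}\ge M_{n_j}+1\ge x_{n_j}+1$, so $\limsup_n r_n/f(n)\ge\liminf_j r_{n_j}/f(n_j)\ge1$.

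I do not expect a serious obstacle; the one step that needs a little care is the degenerate case $c=0$ of part (i), where $m_n$ need not stabilise and the running minimum has to be controlled directly from $x_n/f(n)\to0$ using the monotonicity of $f$ — exactly what fact (b) provides. To finish Theorem \ref{thm:asrangeERW}, I would apply the lemma pathwise to $x_n=S_n$ (so $r_n=R_n$) with $f(n)=\sqrt{b/(b-2a)}\,\phi(\Sigma_n)$ in regime (i), $f(n)=\phi(\Sigma_n\log\Sigma_n)$ in regime (ii), and $f(n)=n^a$ in regime (iii); by \eqref{eq:ERWSSigmanLimit} each is eventually nondecreasing with limit $+\infty$, and the asymptotics of $S_n$ come from Theorem \ref{thm:Bercu22CLTLIL}.
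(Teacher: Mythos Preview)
Your proof is correct and rests on the same two ingredients as the paper's: the nearest-neighbour property of $\{x_n\}$ and the eventual monotonicity of $f$. The organisation differs slightly. The paper never states the interval identity $r_n=M_n-m_n+1$; instead it bounds $r_n$ directly by sandwiching the visited set between an explicit block $\{0,\ldots,\lfloor(c-\varepsilon)f(n)\rfloor\}$ (lower bound, via the intermediate-value property of lazy paths) and $r_{N-1}$ plus an interval containing $\{x_N,\ldots,x_n\}$ (upper bound). Your route---first proving that the visited set is exactly $[m_n,M_n]\cap\mathbb{Z}$ and then controlling the running extrema through the two soft facts (a) and (b)---is a cleaner decomposition that isolates where monotonicity of $f$ is used and makes the degenerate case $c=0$ fall out with no extra case analysis. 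Both arguments are equally elementary; yours packages the idea more reusably, while the paper's is marginally shorter on the page. Your pathwise application to $S_n$ with the three choices of $f$ is exactly how the paper derives Theorem~\ref{thm:asrangeERW} from the lemma.
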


\begin{proof} Without loss of generality, we can assume that $x_0=0$. 

\noindent (i) Suppose that $c>0$. For any $\varepsilon \in (0,c)$, there exists $N$ s.t. $(0<) (c-\varepsilon) f(n) \leq x_n \leq (c+\varepsilon)f(n)$ for all $n \geq N$.
%\[ (0<) (c-\varepsilon) f(n) \leq x_n \leq (c+\varepsilon)f(n) \quad \mbox{for all $n \geq N$.} \]
If $n \geq N$ then $\{x_0,x_1,\ldots,x_n\} \supset \{0,\ldots,\lfloor (c-\varepsilon) f(n)\rfloor \}$ 
%\[ \{x_0,x_1,\ldots,x_n\} \supset \{0,\ldots,\lfloor (c-\varepsilon) f(n)\rfloor \}, \]
and
$\{ x_N,\ldots,x_n\} \subset [0,(c+\varepsilon) f(n)]$,
%\[ \{ x_N,\ldots,x_n\} \subset [0,(c+\varepsilon) f(n)], \]
which imply that
$\lfloor (c-\varepsilon) f(n)\rfloor \leq r_n \leq r_{N-1} + (c+\varepsilon) f(n)$.
%\[ r_n \geq \lfloor (c-\varepsilon) f(n)\rfloor \]
%and
%\[ r_n \leq r_{N-1} + (c+\varepsilon) f(n). \]
Thus we have $\displaystyle c-\varepsilon \leq \liminf_{n \to \infty} \dfrac{r_n}{f(n)} \leq \limsup_{n \to \infty} \dfrac{r_n}{f(n)} \leq c+\varepsilon$.
%\[ c-\varepsilon \leq \liminf_{n \to \infty} \dfrac{r_n}{f(n)} \leq \limsup_{n \to \infty} \dfrac{r_n}{f(n)} \leq c+\varepsilon. \]
Letting $\varepsilon \searrow 0$, we obtain the desired conclusion when $c>0$. For the case $c<0$, consider $\{-x_n\}$. Next suppose that $c=0$. For any $\varepsilon>0$, there exists $N$ such that $-\varepsilon f(n) \leq x_n \leq \varepsilon f(n)$ for all $n \geq N$.
Since $ \{ x_N,\ldots,x_n\} \subset [-\varepsilon f(n),\varepsilon f(n)]$ for all $n \geq N$,
%\[ \{ x_N,\ldots,x_n\} \subset [-\varepsilon f(n),\varepsilon f(n)] \quad \mbox{for all $n \geq N$,} \]
we have $\displaystyle 0 \leq \liminf_{n \to \infty} \dfrac{r_n}{f(n)} \leq \limsup_{n \to \infty} \dfrac{r_n}{f(n)} \leq 2\varepsilon$.
%\[ 0 \leq \liminf_{n \to \infty} \dfrac{r_n}{f(n)} \leq \limsup_{n \to \infty} \dfrac{r_n}{f(n)} \leq 2\varepsilon. \]
This completes the proof of (i).

\noindent (ii) For any $\varepsilon>0$, there exists $N$ such that $-(1+\varepsilon)f(n)\leq x_n \leq (1+\varepsilon)f(n)$ for all $n \geq N$.
%\[ -(1+\varepsilon)f(n)\leq x_n \leq (1+\varepsilon)f(n) \quad \mbox{for all $n \geq N$.} \]
Noting that 
$\{ x_N,\ldots,x_n\} \subset [-(1+\varepsilon)f(n),(1+\varepsilon) f(n)]$ for all $n \geq N$, we have 
$r_n \leq r_{N-1} + 2(1+\varepsilon) f(n)$ for all $n \geq N$,
%\[ r_n \leq r_{N-1} + 2(1+\varepsilon) f(n) \quad \mbox{for all $n \geq N$,} \]
and $\displaystyle \limsup_{n \to \infty} \dfrac{r_n}{f(n)} \leq 2(1+\varepsilon)$.
%\[ \limsup_{n \to \infty} \dfrac{r_n}{f(n)} \leq 2(1+\varepsilon). \]
On the other hand, for any $\varepsilon \in (0,1)$, $x_n \geq (1-\varepsilon) f(n)(>0)$ for infinitely many $n$.
%\[ x_n \geq (1-\varepsilon) f(n)(>0) \quad \mbox{for infinitely many $n$.} \]
As $\{x_0,x_1,\ldots,x_n\} \supset \{0,\ldots,\lfloor (1-\varepsilon) f(n)\rfloor \}$ for infinitely many $n$,
we have $r_n \geq \lfloor (1-\varepsilon) f(n)\rfloor$ for infinitely many $n$,
%\[ r_n \geq \lfloor (1-\varepsilon) f(n)\rfloor \quad \mbox{for infinitely many $n$,}  \]
and $\displaystyle \limsup_{n \to \infty}\dfrac{r_n}{f(n)} \geq 1-\varepsilon$.
%\[ \limsup_{n \to \infty} \dfrac{r_n}{f(n)} \geq 1-\varepsilon. \]
This completes the proof of (ii).
\end{proof}

%\begin{proof}[Proof of Theorem \ref{thm:asrangeERW}] (i) [resp. (ii)] is a consequence of Lemma \ref{lem:BoudabraMarkowsky21extended} (ii) and Eq. \eqref{eq:Bercu22JSP(3.4)} [resp. Eq. \eqref{eq:Bercu22JSP(3.12)}]. (iii) follows from Lemma \ref{lem:BoudabraMarkowsky21extended} (i) and Eq. \eqref{eq:Bercu22JSP(3.18)}.
%\end{proof}

\section{Proof of Theorem \ref{thm:Taniguchi23ThmA}}

\label{sec:proofsB1}

For $x>-1$ and $n \in \mathbb{N}$, we define
\begin{align}
c_n(x):=\prod_{k=1}^{n-1} \left(1+\dfrac{x}{k}\right) = \dfrac{\Gamma(n+x)}{\Gamma(n)\Gamma(x+1)}
\sim \dfrac{n^x}{\Gamma(x+1)}\quad \mbox{as $n \to \infty$,}
\label{def+asymp:c_n(x)}
%\label{def:c_n(x)}
\end{align}
where we used Stirling's formula.

%\begin{align}
%c_n(x):=\prod_{k=1}^{n-1} \left(1+\dfrac{x}{k}\right) = \dfrac{\Gamma(n+x)}{\Gamma(n)\Gamma(x+1)}.
%\label{def:c_n(x)}
%\end{align}
%By Stirling's formula,
%\begin{align}
%c_n(x) \sim \dfrac{n^x}{\Gamma(x+1)}\quad \mbox{as $n \to \infty$.}
%\label{asymp:c_n(x)}
%\end{align}

%Recall that $a=p-q$ qnd $b=p+q=1-r$.

%Proof of Theorem \ref{thm:Taniguchi23ThmA}

\begin{lemma} Assume that $b \in (0,1)$ and $a \in [-b,b]$. If $a=0$ or $s=1/2$, then $\Cov[S_n,\Sigma_n] = 0$ for all $n \in \mathbb{N}$.
%\begin{align}
%\Cov[S_n,\Sigma_n] = 0 \quad \mbox{for all $n \in \mathbb{N}$.}
%\label{eq:Lemma4.1a}
%\end{align}
If $a \neq 0$ and $s \neq 1/2$, then
\begin{align*}
\Cov[S_n,\Sigma_n]\sim \dfrac{2s-1}{\Gamma(a+1)\Gamma(b+1)} \cdot \{ (a+b) \cdot B(a+1,b) -1 \} \cdot n^{a+b}
%Original:
%\frac{2s-1}{b\Gamma(a+b)} \cdot \left\{ 1-\frac{\Gamma(a+b)}{a\Gamma(a)\Gamma(b)} \right\} \cdot n^{a+b}
\quad\mbox{as $n\rightarrow\infty$.}
%\label{eq::Taniguchi23ThmA_2}
\end{align*}
\end{lemma}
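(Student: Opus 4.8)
The plan is to obtain closed linear recursions for $E[S_n]$, $E[\Sigma_n]$ and the mixed moment $T_n:=E[S_n\Sigma_n]$, solve them, and then subtract.

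\emph{Step 1 (recursions by conditioning).} From \eqref{def:ERWwithStopsNextsteps}, and since $U_n$ together with the $p/q/r$-selection is independent of $\mathcal{F}_n$, one has $E[X_{n+1}\mid\mathcal{F}_n]=(p-q)S_n/n=aS_n/n$ alongside the already-noted $E[X_{n+1}^2\mid\mathcal{F}_n]=b\Sigma_n/n$. Applying the first identity to $S_{n+1}=S_n+X_{n+1}$ with $E[S_1]=2s-1$ yields $E[S_n]=(2s-1)c_n(a)$, and applying the second to $\Sigma_{n+1}=\Sigma_n+X_{n+1}^2$ with $\Sigma_1=1$ yields $E[\Sigma_n]=c_n(b)$. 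For the mixed term I would write $S_{n+1}\Sigma_{n+1}=S_n\Sigma_n+X_{n+1}^2S_n+X_{n+1}\Sigma_n+X_{n+1}^3$, use $X_{n+1}^3=X_{n+1}$ (valid since $X_{n+1}\in\{-1,0,+1\}$), condition on $\mathcal{F}_n$ and take expectations, obtaining
\[
T_{n+1}=\Bigl(1+\frac{a+b}{n}\Bigr)T_n+\frac{a(2s-1)}{n}\,c_n(a),\qquad T_1=2s-1.
\]

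\emph{Step 2 (solving the recursion).} Dividing by $c_{n+1}(a+b)=(1+(a+b)/n)\,c_n(a+b)$ and summing the telescoped increments gives, after rewriting the summand through the Gamma-function expression for $c_k$ in \eqref{def+asymp:c_n(x)},
\[
\frac{T_n}{c_n(a+b)}=(2s-1)\Bigl[\,1+\frac{a\,\Gamma(a+b+1)}{\Gamma(a+1)}\sum_{k=1}^{n-1}\frac{\Gamma(k+a)}{\Gamma(k+a+b+1)}\Bigr].
\]
The series converges (since $b>0$) and in fact telescopes, because $b\,\Gamma(k+a)/\Gamma(k+a+b+1)=\Gamma(k+a)/\Gamma(k+a+b)-\Gamma(k+1+a)/\Gamma(k+1+a+b)$; its partial sum equals $\tfrac{1}{b}\bigl(\Gamma(1+a)/\Gamma(1+a+b)-\Gamma(n+a)/\Gamma(n+a+b)\bigr)$, and the last term is $O(n^{-b})$. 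Therefore $T_n/c_n(a+b)\to(2s-1)(a+b)/b$, i.e. $T_n=(2s-1)\tfrac{a+b}{b}\,c_n(a+b)+O(n^{a})$.

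\emph{Step 3 (subtracting and simplifying).} Using $E[S_n]E[\Sigma_n]=(2s-1)\,c_n(a)\,c_n(b)$ and the asymptotics $c_n(x)\sim n^x/\Gamma(x+1)$ from \eqref{def+asymp:c_n(x)} (noting $n^a=o(n^{a+b})$ since $b>0$), the leading $n^{a+b}$-contributions give
\[
\Cov[S_n,\Sigma_n]=E[S_n\Sigma_n]-E[S_n]E[\Sigma_n]\sim(2s-1)\Bigl(\frac{a+b}{b\,\Gamma(a+b+1)}-\frac{1}{\Gamma(a+1)\Gamma(b+1)}\Bigr)n^{a+b}.
\]
Using $\Gamma(a+b+1)=(a+b)\Gamma(a+b)$, $\Gamma(b+1)=b\Gamma(b)$ and $\Gamma(a+1)\Gamma(b)/\Gamma(a+b)=(a+b)B(a+1,b)$, the parenthesis equals $\tfrac{1}{\Gamma(a+1)\Gamma(b+1)}\{(a+b)B(a+1,b)-1\}$, which is exactly the claimed equivalence; it is nonzero whenever $a\neq0$, since $a\mapsto\Gamma(a+1)\Gamma(b)/\Gamma(a+b)$ is strictly increasing on $(-1,\infty)$ (the digamma function is increasing and $1>b$) and equals $1$ at $a=0$. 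Finally the degenerate cases are read off the recursions: if $a=0$ then $c_n(a)\equiv1$, so the $T_n$-recursion is the $\Sigma_n$-recursion multiplied by $2s-1$ and $T_n=(2s-1)c_n(b)=E[S_n]E[\Sigma_n]$; if $s=1/2$ then $E[S_1]=T_1=0$ and both recursions are homogeneous, so $E[S_n]=T_n\equiv0$; in either case $\Cov[S_n,\Sigma_n]=0$ for all $n\in\mathbb{N}$.

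\emph{Main obstacle.} The argument is computational rather than conceptual once one exploits $X_{n+1}^3=X_{n+1}$ to get a clean recursion for $T_n$ and telescopes the Gamma-ratio series. The delicate point is that $E[S_n\Sigma_n]$ and $E[S_n]E[\Sigma_n]$ are individually of the same order $n^{a+b}$, so the covariance is a difference of two leading-order terms; the constants must be carried through exactly, and one must verify that the resulting coefficient $(a+b)B(a+1,b)-1$ does not vanish for $a\neq0$ — it vanishes only at $a=0$, precisely the case treated separately.
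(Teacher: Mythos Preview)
Your proof is correct and follows essentially the same route as the paper: compute $E[S_n]$, $E[\Sigma_n]$ and $E[S_n\Sigma_n]$ exactly (the paper quotes these as Eqs.\ (C.17), (4.20), (C.19) from \cite{Bercu22}, whereas you re-derive them via the recursion and the telescoping Gamma-ratio identity, recovering precisely $E[S_n\Sigma_n]=\tfrac{2s-1}{b}\{(a+b)c_n(a+b)-ac_n(a)\}$), then subtract and apply Stirling. Your digamma monotonicity argument for the nonvanishing of $(a+b)B(a+1,b)-1$ when $a\neq0$ is a clean addition; the paper handles this only implicitly via the sign analysis of $P_{a,b}$ after Theorem~\ref{thm:Taniguchi23ThmA}.
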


\begin{proof}
Eqs. (C.17), (4.20) and (C.19) in \cite{Bercu22} can be read as follows:
\begin{align}
 E[S_n] &= (2s-1) \cdot c_n(a),
 %\dfrac{\Gamma(n+a)}{\Gamma(n)\Gamma(a+1)} \cdot E[X_1], 
 \label{eq:Bercu22JSP(C.17)} \\
 E[\Sigma_n] &= c_n(b),
 % \dfrac{\Gamma(n+b)}{\Gamma(n)\Gamma(b+1)}.
 \label{eq:Bercu22JSP(4.20)m=1} \\
 E[S_n\Sigma_n] &= \dfrac{2s-1}{b} \cdot \{(a+b) \cdot c_n(a+b)-a \cdot c_n(a)\}.
 \label{eq:Bercu22JSP(C.19)} 
\end{align}
If $a=0$ then $E[S_n\Sigma_n]  = (2s-1) \cdot c_n(b) = E[S_n] E[\Sigma_n]$ for all $n \in \mathbb{N}$.
%\begin{align*}
%E[S_n\Sigma_n] &= \dfrac{2s-1}{b} \cdot b \cdot  c_n(b) \\
%&= (2s-1) \cdot c_n(b) = E[S_n] E[\Sigma_n]\quad \mbox{for all $n \in \mathbb{N}$.}
%\end{align*}
If $s=1/2$ then $E[S_n\Sigma_n]= 0=E[S_n] E[\Sigma_n] $ for all $n \in \mathbb{N}$. %Thus we have \eqref{eq:Lemma4.1a}.
%As for \eqref{eq::Taniguchi23ThmA_2}, 
If $a \neq 0$ and $s \neq 1/2$, then by \eqref{def+asymp:c_n(x)},
\begin{align*}
E[S_n\Sigma_n] %&\sim \dfrac{2s-1}{b} \cdot (a+b) \cdot c_n(a+b) 
\sim \dfrac{2s-1}{b\Gamma(a+b)} \cdot n^{a+b}%, 
\quad \mbox{and} \quad %\\
%\intertext{and}
E[S_n]E[\Sigma_n] &\sim %(2s-1) \cdot \dfrac{n^a}{\Gamma(a+1)} \cdot \dfrac{n^b}{\Gamma(b+1)} =
 \dfrac{2s-1}{\Gamma(a+1)\Gamma(b+1)} \cdot n^{a+b},
\end{align*}
as $n \to \infty$. This completes the proof.
\end{proof}

\begin{lemma} Assume that $b \in (0,1]$. As $n \to \infty$,
\begin{align*}
V[S_n] &\sim 
\begin{cases}
\dfrac{1}{(b-2a)\Gamma(b)} \cdot n^b  &\mbox{if $a\in [-b,b/2)$,} \\
\dfrac{1}{\Gamma(2a)} \cdot n^{2a}\log n&\mbox{if $a=b/2$,} \\
\dfrac{a^2 \cdot B(a,a)-(2s-1)^2 \cdot (2a-b)}{(2a-b)\Gamma(a+1)^2} \cdot n^{2a} 
%Original:
%\left\{ \dfrac{1}{(2a-b)\Gamma(2a)}-\dfrac{(2s-1)^2}{a^2\Gamma(a)^2} \right\} \cdot n^{2a} 
& \mbox{if $a\in (b/2,b]$.}
\end{cases}
\end{align*}
\end{lemma}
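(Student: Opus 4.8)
The plan is to exploit the recursive structure of the ERWS in the same way Bercu does, and to derive a closed recursion for $V[S_n]$ (or equivalently for $E[S_n^2]$), then solve it by the standard telescoping-product method that already produced \eqref{eq:Bercu22JSP(C.17)}--\eqref{eq:Bercu22JSP(C.19)}. First I would compute $E[S_{n+1}^2 \mid \mathcal{F}_n]$. Using $S_{n+1} = S_n + X_{n+1}$ and the conditional law of $X_{n+1}$ given $\mathcal{F}_n$, namely that $X_{n+1} = X_{U_n}$ with probability $p$, $-X_{U_n}$ with probability $q$, and $0$ with probability $r$, one gets $E[X_{n+1} \mid \mathcal{F}_n] = (a/n) S_n$ and $E[X_{n+1}^2 \mid \mathcal{F}_n] = (b/n)\Sigma_n$ and $E[S_n X_{n+1}\mid\mathcal F_n] = (a/n)S_n^2$. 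Hence
\begin{align*}
E[S_{n+1}^2 \mid \mathcal{F}_n] = \left(1 + \frac{2a}{n}\right) S_n^2 + \frac{b}{n}\,\Sigma_n.
\end{align*}
Taking expectations and using \eqref{eq:Bercu22JSP(4.20)m=1}, i.e. $E[\Sigma_n] = c_n(b)$, yields the linear recursion $E[S_{n+1}^2] = \left(1 + \tfrac{2a}{n}\right) E[S_n^2] + \tfrac{b}{n} c_n(b)$, which is solved by dividing through by $c_{n+1}(2a)$ and summing; the solution is $E[S_n^2] = c_n(2a)\left( (2s-1)^2 + \sum_{k=1}^{n-1} \tfrac{b\,c_k(b)}{k\,c_{k+1}(2a)}\right)$ up to an index bookkeeping adjustment, using $E[S_1^2]=1$.

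Next I would extract the asymptotics of the sum $\sum_{k=1}^{n-1} \tfrac{b\,c_k(b)}{k\,c_{k+1}(2a)}$ from \eqref{def+asymp:c_n(x)}, which gives $\tfrac{b\,c_k(b)}{k\,c_{k+1}(2a)} \sim \tfrac{b\,\Gamma(2a+1)}{\Gamma(b+1)}\, k^{b-2a-1}$. There are three cases according to the sign of $b-2a$. If $a < b/2$ the exponent $b-2a-1 > -1$, so the sum grows like $\tfrac{b\,\Gamma(2a+1)}{\Gamma(b+1)(b-2a)} n^{b-2a}$ and dominates the constant $(2s-1)^2$; multiplying by $c_n(2a)\sim n^{2a}/\Gamma(2a+1)$ gives $V[S_n] = E[S_n^2] - E[S_n]^2 \sim \tfrac{b}{\Gamma(b+1)(b-2a)} n^b = \tfrac{1}{(b-2a)\Gamma(b)} n^b$ (the subtracted term $E[S_n]^2 = (2s-1)^2 c_n(a)^2 \asymp n^{2a}$ is of lower order since $2a<b$). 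If $a = b/2$ the exponent is $-1$, the sum is $\sim \tfrac{b\,\Gamma(2a+1)}{\Gamma(b+1)}\log n = \Gamma(2a+1)\log n/\Gamma(2a)\cdot\tfrac1{2a}\cdot b$; combined with $c_n(2a)$ this gives $n^{2a}\log n/\Gamma(2a)$, and again $E[S_n]^2 \asymp n^{2a}$ is negligible against $n^{2a}\log n$. If $a > b/2$ the exponent is $< -1$, the sum converges, and $E[S_n^2] \sim C\, n^{2a}$ with $C = (2s-1)^2 + \sum_{k=1}^\infty \tfrac{b\,c_k(b)}{k\,c_{k+1}(2a)}$ all divided by $\Gamma(2a+1)$; here $E[S_n]^2 = (2s-1)^2 c_n(a)^2 \sim (2s-1)^2 n^{2a}/\Gamma(a+1)^2$ is of the \emph{same} order and must be subtracted, and one must identify the infinite sum in closed form.

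The main obstacle is the supercritical case: I will need to evaluate $\sum_{k=1}^\infty \tfrac{b\,c_k(b)}{k\,c_{k+1}(2a)}$ exactly and show the resulting constant equals $\tfrac{a^2 B(a,a)}{(2a-b)\Gamma(a+1)^2}\cdot\Gamma(2a+1)$ minus the contribution that cancels against $(2s-1)^2$-type terms, so that after the subtraction $V[S_n]\sim \tfrac{a^2 B(a,a) - (2s-1)^2(2a-b)}{(2a-b)\Gamma(a+1)^2} n^{2a}$. The cleanest route is probably to avoid summing directly: instead define $M_n := S_n^2 / c_n(2a) - (\text{explicit compensator built from } c_n(b))$ and verify it is an $L^2$-bounded martingale (this is exactly Bercu's strategy, and the relevant martingale and its $L^2$ bound for $a>b/2$ appear implicitly in \cite{Bercu22}); then $E[S_n^2]/c_n(2a)$ converges to a limit whose value is pinned down by a single telescoping identity $\sum_{k\ge 1}\left(\tfrac{c_k(b)}{c_k(2a)} - \tfrac{c_{k+1}(b)}{c_{k+1}(2a)}\right)$-type computation together with the Beta-integral representation $\sum_{k\ge1} c_k(b) k^{-1} \cdots$, matching the Gamma/Beta constants via $B(a,a) = \Gamma(a)^2/\Gamma(2a)$. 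I expect the bookkeeping to be the only real work; conceptually everything reduces to the one recursion for $E[S_{n+1}^2\mid\mathcal F_n]$ above plus the asymptotics \eqref{def+asymp:c_n(x)} already recorded.
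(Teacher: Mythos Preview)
Your approach is essentially the paper's: derive the recursion $E[S_{n+1}^2] = (1+2a/n)\,E[S_n^2] + (b/n)\,c_n(b)$, solve it, and subtract $E[S_n]^2$. Two points deserve correction, though.

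First, the initial term in your solved formula must be $E[S_1^2]=1$, not $(2s-1)^2$; you state the former but write the latter. In the subcritical and critical regimes this constant is swamped by the divergent sum, but in the supercritical regime it survives to leading order, so carrying the slip forward would yield the wrong limiting constant.

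Second, the supercritical case needs none of the martingale or Beta-integral machinery you outline. The finite sum telescopes exactly via the elementary identity
\[
\sum_{k=1}^{n-1} \frac{c_k(x)}{k\,c_{k+1}(y)} \;=\; \frac{1}{x-y}\left(\frac{c_n(x)}{c_n(y)}-1\right)\qquad (x\neq y),
\]
which is \eqref{eq:AkimotoTypeFormula} in the paper. With the correct initial value $1$ this gives the closed form
\[
E[S_n^2] \;=\; \frac{b\,c_n(b)-2a\,c_n(2a)}{b-2a}\qquad (a\neq b/2),
\]
i.e.\ Bercu's Eq.~(A.6), which the paper simply quotes. For $a>b/2$ the $c_n(2a)$-term dominates, so $E[S_n^2]\sim n^{2a}/\{(2a-b)\Gamma(2a)\}$; rewriting $1/\Gamma(2a) = a^2 B(a,a)/\Gamma(a+1)^2$ and subtracting $E[S_n]^2\sim (2s-1)^2 n^{2a}/\Gamma(a+1)^2$ produces the stated constant in one line, with no infinite sum to evaluate.
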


\begin{proof}
%It is shown around Eqs. (A.5) and (A.6) in \cite{Bercu22} that  
From the calculation leading Eq. (A.6) in \cite{Bercu22},% (see also Lemma \ref{lem:Bercu18LemmaVariant} below),
\begin{align}
E[S_n^2] %&= c_n(2a)\cdot \left\{ 1+ b  \cdot \sum_{k=1}^{n-1} \dfrac{c_k(b)}{k \cdot c_{k+1} (2a)}\right\} \notag \\ % see the 2nd line of E[S_n^2]
&=\begin{cases}
\dfrac{b \cdot c_n(b) - 2a \cdot c_n(2a)}{b-2a} &\mbox{if $a \neq b/2$,} \\
\displaystyle 2a \cdot c_n(2a) \cdot \sum_{k=1}^n \dfrac{1}{k+2a}\ &\mbox{if $a=b/2$.} 
% c_n(2a)\cdot \left( 1+ 2a  \cdot \sum_{k=1}^{n-1} \dfrac{1}{k+2a}\right) 
\label{eq:Bercu22(A.6)}
%Bercu (2022):
%2a \cdot c_n(2a) \cdot \sum_{k=1}^n \dfrac{1}{k+2a}
%b-verion:
%b \cdot c_n(b) \cdot \sum_{k=1}^n \dfrac{\Gamma(k+b-1)}{\Gamma(k+b)} \\  
%&\sim b \cdot \dfrac{n^b}{\Gamma(b+1)} \cdot \log n = \dfrac{n^b \log n}{\Gamma(b)} \quad \mbox{as $n \to \infty$.}
\end{cases}
\end{align}
By \eqref{def+asymp:c_n(x)}, we have as $n \to \infty$,
\begin{align}
E[S_n^2]  
\sim \begin{cases}
%\dfrac{b}{b-2a} \cdot \dfrac{n^b}{\Gamma(b+1)}=
\dfrac{1}{(b-2a)\Gamma(b)} \cdot n^b &\mbox{if $a \in [-b,b/2)$,} \\
%2a \cdot \dfrac{n^{2a}}{\Gamma(2a+1)} \cdot \log n=
\dfrac{1}{\Gamma(2a)} \cdot n^{2a}\log n &\mbox{if $a=b/2$.} \\
%-\dfrac{2a}{b-2a} \cdot \dfrac{n^{2a}}{\Gamma(2a+1)} =
\dfrac{1}{(2a-b)\Gamma(2a)} \cdot n^{2a}&\mbox{if $a\in (b/2,b]$.}
\end{cases}
\label{eq:Bercu22JSP(A.6)VarAsymp}
\end{align}
%We do not use:
%From Eq. (3.22) in \cite{Bercu22}, if $2a>b$, then
%\begin{align*}
%\lim_{n \to \infty} E\left[ \left(\dfrac{S_n}{n^a}\right)^2\right] = \dfrac{1}{(2a-b)\Gamma(2a)}.
%\label{eq:Bercu22JSP(3.22)}
%\end{align*}
On the other hand, by \eqref{eq:Bercu22JSP(C.17)} and \eqref{def+asymp:c_n(x)}, $E[S_n]^2 \sim (2s-1)^2n^{2a} /\{\Gamma(a+1)^2\}$ as $n \to \infty$.
%\begin{align*}
%E[S_n]^2 \sim (2s-1)^2 \cdot \left\{ \dfrac{n^a}{\Gamma(a+1)} \right\}^2 = \dfrac{(2s-1)^2}{a^2 \Gamma(a)^2} \cdot n^{2a} \quad \mbox{as $n \to \infty$.}
%\end{align*}
Now the desired conclusion follows.
\end{proof}

\begin{lemma} \label{lem:VSigman}
If $b \in (0,1)$ then $V[\Sigma_n] \sim \dfrac{b \cdot B(b,b) -1}{\Gamma(b+1)^2}  \cdot n^{2b}$ as $n \to \infty$.
%\begin{align*}
%V[\Sigma_n] &\sim \dfrac{b \cdot B(b,b) -1}{b^2\Gamma(b)^2}  \cdot n^{2b}
%Original:
%\left\{\dfrac{2}{\Gamma(2b+1)}- \dfrac{1}{\Gamma(b+1)^2}\right\}\cdot n^{2b}  
%\quad \mbox{as $n \to \infty$.}
%= \left\{1-\dfrac{\Gamma(2b)}{b^2 \Gamma(b)^2} \right\} \cdot\dfrac{n^{2b}}{\Gamma(2b)}
%\end{align*}
\end{lemma}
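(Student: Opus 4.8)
The plan is to compute the second moment $E[\Sigma_n^2]$ asymptotically and then subtract $E[\Sigma_n]^2$, using the recursion that $\{\Sigma_n\}$ is the laziest minimal ERW with parameter $b$. Recall from \eqref{eq:ERWSSigmanLimit} that $\Sigma_n/n^b \to \Sigma$ a.s. where $\Sigma$ has a Mittag--Leffler distribution with parameter $b$; since the second moment of such a distribution is known, one expects $E[\Sigma_n^2] \sim E[\Sigma^2] \cdot n^{2b}$ and hence $V[\Sigma_n] \sim (E[\Sigma^2] - E[\Sigma]^2) n^{2b}$, which should reproduce the stated constant $(b B(b,b)-1)/\Gamma(b+1)^2$ once $E[\Sigma^2]$ is identified with $b B(b,b)/\Gamma(b+1)^2$. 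However, to keep the argument self-contained and to get the asymptotics rigorously (rather than relying on uniform integrability for the Mittag--Leffler convergence), I would instead derive a closed recursion for $E[\Sigma_n^2]$ directly.

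First I would set up the recursion. Writing $\Sigma_{n+1} = \Sigma_n + X_{n+1}^2$ and using $P(X_{n+1}^2 = 1 \mid \mathcal{F}_n) = b\Sigma_n/n$ together with $(X_{n+1}^2)^2 = X_{n+1}^2$, we get
\begin{align*}
E[\Sigma_{n+1}^2 \mid \mathcal{F}_n] &= \Sigma_n^2 + 2\Sigma_n \cdot \frac{b\Sigma_n}{n} + \frac{b\Sigma_n}{n} = \left(1 + \frac{2b}{n}\right)\Sigma_n^2 + \frac{b}{n}\Sigma_n.
\end{align*}
Taking expectations and abbreviating $m_n := E[\Sigma_n^2]$, this is a linear recursion $m_{n+1} = (1 + 2b/n) m_n + (b/n) E[\Sigma_n]$, and by \eqref{eq:Bercu22JSP(4.20)m=1} we already know $E[\Sigma_n] = c_n(b) \sim n^b/\Gamma(b+1)$. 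The homogeneous part is solved by $c_n(2b) \sim n^{2b}/\Gamma(2b+1)$, so by variation of constants $m_n = c_n(2b)\left(m_1 + \sum_{k=1}^{n-1} \frac{b\, E[\Sigma_k]}{k\, c_{k+1}(2b)}\right)$. Plugging in $E[\Sigma_k] = c_k(b)$ and using $c_{k+1}(2b) = (1+2b/k)c_k(2b)$ and the asymptotics $c_k(b)/c_k(2b) \sim \Gamma(2b+1)/(\Gamma(b+1) k^b)$, the summand behaves like a constant times $k^{-1-b}$ times... wait, that converges, which cannot give the $n^{2b}$ growth --- so I would be more careful and instead sum exactly: the point is that $\sum_{k=1}^{n-1} \frac{b c_k(b)}{k c_{k+1}(2b)}$ should be evaluated by recognizing a telescoping-type identity or by direct asymptotic analysis showing the partial sums grow like $n^b$, which combined with the $c_n(2b) \sim n^{2b}$ prefactor gives $m_n \asymp n^{2b} \cdot$(something involving $n^{-b}$)? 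This tension tells me the exact combinatorial identity for $E[\Sigma_n^2]$ must produce a term of pure order $n^{2b}$ with coefficient exactly $b B(b,b)/\Gamma(b+1)^2$; the cleanest route is therefore to quote or re-derive the explicit formula for $E[\Sigma_n^2]$ from \cite{MiyazakiTakei20JSP} (where the laziest minimal ERW is studied in detail) in the form $E[\Sigma_n^2] = \alpha c_n(2b) + \beta c_n(b)$ for suitable constants $\alpha, \beta$ determined by matching the recursion and initial condition, analogous to the structure of \eqref{eq:Bercu22(A.6)}.

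Once the explicit form $E[\Sigma_n^2] = \alpha c_n(2b) + \beta c_n(b)$ is in hand (with $\alpha$ the dominant coefficient), the asymptotics \eqref{def+asymp:c_n(x)} give $E[\Sigma_n^2] \sim \alpha n^{2b}/\Gamma(2b+1)$, while $E[\Sigma_n]^2 = c_n(b)^2 \sim n^{2b}/\Gamma(b+1)^2$, so $V[\Sigma_n] \sim (\alpha/\Gamma(2b+1) - 1/\Gamma(b+1)^2) n^{2b}$. It then remains to check the bookkeeping identity $\alpha/\Gamma(2b+1) - 1/\Gamma(b+1)^2 = (b B(b,b)-1)/\Gamma(b+1)^2$, i.e. that $\alpha = \Gamma(2b+1) b B(b,b)/\Gamma(b+1)^2 = b \Gamma(2b+1)\Gamma(b)/\Gamma(2b) \cdot 1/\Gamma(b+1) = 2b^2 B(b,b)/\dots$; using $B(b,b) = \Gamma(b)^2/\Gamma(2b)$ and $\Gamma(b+1) = b\Gamma(b)$, $\Gamma(2b+1) = 2b\Gamma(2b)$, this reduces to a Gamma-function identity that should fall out of the explicit solution of the recursion. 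The main obstacle is obtaining the exact (not merely asymptotic) closed form for $E[\Sigma_n^2]$ with the correct leading coefficient $\alpha$: the variation-of-constants sum looks deceptively like it converges, and resolving this requires either a careful exact evaluation of $\sum_k \frac{b c_k(b)}{k c_{k+1}(2b)}$ (which I expect telescopes because $\frac{c_k(b)}{c_k(2b)}$ satisfies a first-order recursion) or directly importing Corollary 1 and its proof from \cite{MiyazakiTakei20JSP}; everything after that is routine Stirling-formula asymptotics and Gamma-function algebra.
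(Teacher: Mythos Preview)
Your approach is the paper's: compute $E[\Sigma_n^2]$ explicitly and subtract $E[\Sigma_n]^2 = c_n(b)^2$. The paper is simply shorter because it quotes from \cite{Bercu22} (Eq.~(4.20) there) the identity $E[\Sigma_n(\Sigma_n+1)] = 2c_n(2b)$, so that $E[\Sigma_n^2] = 2c_n(2b) - c_n(b)$ with no recursion to solve; then $V[\Sigma_n] = 2c_n(2b) - c_n(b) - c_n(b)^2$ and \eqref{def+asymp:c_n(x)} finishes.

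Your ``tension'' about the variation-of-constants sum is a misreading, not a real obstacle. The sum $\sum_{k=1}^{n-1} \frac{b\,c_k(b)}{k\,c_{k+1}(2b)}$ \emph{does} converge --- and that is precisely what is needed, because the prefactor $c_n(2b)\sim n^{2b}/\Gamma(2b+1)$ already supplies the $n^{2b}$ growth. In fact the telescoping identity \eqref{eq:AkimotoTypeFormula} with $x=b$, $y=2b$ evaluates the partial sum exactly as $1 - c_n(b)/c_n(2b)$, so together with $m_1=1$ your own formula gives
\[
m_n = c_n(2b)\Bigl(1 + 1 - \tfrac{c_n(b)}{c_n(2b)}\Bigr) = 2c_n(2b) - c_n(b),
\]
recovering the quoted identity with $\alpha=2$, $\beta=-1$. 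The Gamma-function check you outline then reduces to $2/\Gamma(2b+1) = bB(b,b)/\Gamma(b+1)^2$, which is immediate from $B(b,b)=\Gamma(b)^2/\Gamma(2b)$ and $\Gamma(x+1)=x\Gamma(x)$. No appeal to \cite{MiyazakiTakei20JSP} or to uniform integrability is needed.
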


\begin{proof}
From Eq. (4.20) in \cite{Bercu22}, 
\begin{align}
%E[\Sigma_n] = c_n(b)
%\quad \mbox{and} \quad 
E[\Sigma_n(\Sigma_n+1)] = 2 c_n(2b)
\quad \mbox{for $n \in \mathbb{N}$.}
\label{eq:Bercu22(4.20)1st2nd}
\end{align}
We have $V[\Sigma_n] = E[\Sigma_n^2] - E[\Sigma_n]^2 =\{2 c_n(2b) - c_n(b)\} - \{c_n(b)\}^2$,
%\begin{align*}
%V[\Sigma_n] &= E[\Sigma_n^2] - E[\Sigma_n]^2 =\{2 c_n(2b) - c_n(b)\} - \{c_n(b)\}^2,
%\end{align*}
which together with \eqref{def+asymp:c_n(x)} yields the claim.
\end{proof}

Combining the above lemmata, we have Theorem \ref{thm:Taniguchi23ThmA}.
For $a \in [-b,0)$, since $a(b-1) > 0$, we have $(a+b) \cdot B(a+1,b) \leq (a+b)/\{(a+1)b\} < 1$, and thus $P_{a,b}<0$. As for $a \in (0,b/2)$, noting that $(a+b) \cdot B(a+1,b) = a \cdot B(a,b) > a \cdot B(a,1)=1$, we have $P_{a,b}>0$. Similarly we see that $Q_a,R_{a,b,s}>0$.
%Original:
%\begin{align*}
%K_{a,b}&=\frac{1-\frac{\Gamma(a+b)}{a\Gamma(a)\Gamma(b)}}{b\Gamma(a+b)\sqrt{\frac{1}{(b-2a)\Gamma(b)}(\frac{1}{b\Gamma(2b)}-\frac{1}{b^2\Gamma(b)^2})}}, \\
%D_a&=\frac{1-\frac{\Gamma(3a)}{a\Gamma(a)\Gamma(2a)}}{2a\Gamma(3a)\sqrt{\frac{1}{2a\Gamma(2a)\Gamma(4a)}(1-\frac{\Gamma(4a)}{2a\Gamma(2a)^2})}},\\
%C_{a,b}&=\frac{1-\frac{\Gamma(a+b)}{a\Gamma(a)\Gamma(b)}}{b\Gamma(a+b)\sqrt{(\frac{1}{(2a-b)\Gamma(2a)}-\frac{(2s-1)^2}{a^2\Gamma(a)^2})(\frac{1}{b\Gamma(2b)}-\frac{1}{b^2\Gamma(b)^2})}}.
%\end{align*}

\section{Proof of Theorem \ref{thm:Taniguchi23ThmB}}

\label{sec:proofsB2}

\begin{lemma} \label{lem:Sn2Sigman}
For $b \in (0,1)$,
\begin{align*}
\Cov[S_n^2,\Sigma_n]
&\sim
\begin{cases}
\dfrac{b \cdot B(b,b)-1}{b(b-2a)\Gamma(b)^2}\cdot n^{2b}
%Original:
%\dfrac{1}{(b-2a)\Gamma(2b)}\cdot \left\{ 1-\dfrac{\Gamma(2b)}{b\Gamma(b)^2}\right\} \cdot n^{2b}
&\mbox{if $a \in [-b,b/2)$,}\\
\dfrac{b \cdot B(b,b)-1}{b\Gamma(b)^2}\cdot n^{2b}\log n
%Original:
%\dfrac{1}{\Gamma(2b)} \cdot \left\{ 1-\dfrac{\Gamma(2b)}{b\Gamma(b)^2}\right\} \cdot n^{2b}\log n
&\mbox{if $a=b/2$,} \\
%\dfrac{1}{\Gamma(4a)} \cdot \left\{ 1-\dfrac{\Gamma(4a)}{2a\Gamma(2a)^2}\right\} \cdot n^{4a}\log n
\dfrac{2a \cdot B(2a,b)-1}{b(2a-b)\Gamma(2a)\Gamma(b)}\cdot n^{2a+b}
%Original:
%\dfrac{2a}{(2a-b)b\Gamma(2a+b)} \cdot \left\{ 1-\dfrac{\Gamma(2a+b)}{2a\Gamma(2a)\Gamma(b)} \right\} \cdot n^{2a+b}
&\mbox{if $a \in (b/2,b]$.} \\
\end{cases}
%\label{eq::Taniguchi23ThmA_3}
\end{align*}
\end{lemma}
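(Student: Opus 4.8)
The plan is to compute $\Cov[S_n^2,\Sigma_n] = E[S_n^2\Sigma_n] - E[S_n^2]E[\Sigma_n]$ asymptotically, using the already-established asymptotics $E[S_n^2]\sim \ldots$ from \eqref{eq:Bercu22JSP(A.6)VarAsymp} and $E[\Sigma_n]=c_n(b)\sim n^b/\Gamma(b+1)$ from \eqref{eq:Bercu22JSP(4.20)m=1}. The only genuinely new ingredient needed is the mixed third moment $E[S_n^2\Sigma_n]$. First I would derive a closed-form (or at least a recursion with explicit solution) for $E[S_n^2\Sigma_n]$. The natural route is to condition on $\mathcal{F}_n$ and use the transition rule \eqref{def:ERWwithStopsNextsteps}: writing $S_{n+1}=S_n+X_{n+1}$ and $\Sigma_{n+1}=\Sigma_n+X_{n+1}^2$, and recalling $P(X_{n+1}=\pm X_{U_n}\mid\mathcal{F}_n)$ together with $E[X_{U_n}\mid\mathcal{F}_n]=S_n/n$, $E[X_{U_n}^2\mid\mathcal{F}_n]=\Sigma_n/n$, one gets
\begin{align*}
E[S_{n+1}^2\Sigma_{n+1}\mid\mathcal{F}_n] = S_n^2\Sigma_n + \frac{(\text{linear in }n)}{n}\,S_n^2\Sigma_n + (\text{lower-order mixed terms}),
\end{align*}
where the lower-order terms involve $E[S_n^2]$, $E[S_n\Sigma_n]$ (known from \eqref{eq:Bercu22JSP(C.19)}), $E[\Sigma_n]$, and $E[\Sigma_n^2]$ (known from \eqref{eq:Bercu22(4.20)1st2nd}). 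Taking expectations yields a scalar recursion $u_{n+1} = (1 + \alpha/n)u_n + (\text{known forcing})$ with $\alpha$ a combination of $a$ and $b$; I expect $\alpha = 2a+b$, so the homogeneous solution grows like $c_n(2a+b)\sim n^{2a+b}/\Gamma(2a+b+1)$.

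Second, I would solve this recursion by the standard integrating-factor method: $u_n = c_n(2a+b)\bigl(u_1/c_1(2a+b) + \sum_{k} (\text{forcing}_k)/c_{k+1}(2a+b)\bigr)$, plug in the explicit forms of the forcing terms (each a linear combination of $c_k(2a+b)$, $c_k(2b)$, $c_k(a+b)$, $c_k(b)$, $c_k(2a)$, and constants), and evaluate the resulting sums asymptotically using $\sum_{k=1}^n c_k(x)/c_{k+1}(y) \sim$ (beta-function-type constant)$\cdot c_n(x-y+1)$ or a logarithmic factor in the borderline case, exactly as in the computations behind \eqref{eq:Bercu22JSP(A.6)VarAsymp}. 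This is where the beta functions $B(b,b)$ and $B(2a,b)$ and the case split at $a=b/2$ will emerge: the term of order $n^{2b}$ competes with the term of order $n^{2a+b}$, the former dominating when $2a<b$, the latter when $2a>b$, with a $\log n$ enhancement exactly at $a=b/2$ (where $c_n(2a+b)$ and $c_n(2b)$ have the same growth, producing $\sum 1/k$).

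Third, I would combine with $E[S_n^2]E[\Sigma_n]$. In the subcritical and critical cases $E[S_n^2]E[\Sigma_n]$ is of order $n^{b}\cdot n^{b} = n^{2b}$ (resp. $n^{2b}\log n$), the same order as the leading part of $E[S_n^2\Sigma_n]$, so I must track constants carefully and check that the $B(b,b)$-dependence does \emph{not} cancel — it should not, since $\Cov[\Sigma_n,\Sigma_n]=V[\Sigma_n]$ already carries a factor $b B(b,b)-1$ by Lemma \ref{lem:VSigman}, and the same structure propagates here. In the supercritical case $E[S_n^2]\sim \text{const}\cdot n^{2a}$ (the constant from Lemma above, involving $a^2 B(a,a)$ and $(2s-1)^2$), so $E[S_n^2]E[\Sigma_n]$ is of order $n^{2a+b}$, again matching the leading order of $E[S_n^2\Sigma_n]$, and I must verify the difference has the stated numerator $2a B(2a,b)-1$. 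A useful consistency check throughout: setting $a=b$ and $s$ arbitrary should reduce consistently, and the $a\uparrow b/2$ and $a\downarrow b/2$ limits of the two regimes should both blow up in a way compatible with the $\log n$ in the critical case.

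The main obstacle will be the bookkeeping in the supercritical regime: the forcing term in the recursion for $E[S_n^2\Sigma_n]$ contains a piece of order $c_k(2a)$ (coming from $E[S_k^2]$'s martingale part) whose coefficient involves $(2s-1)^2$, and one must check how this interacts — I expect the $(2s-1)^2$-dependence to cancel in $\Cov[S_n^2,\Sigma_n]$ (consistent with the stated $R'_{a,b}$ being independent of $s$), because the same $(2s-1)^2 n^{2a}$ term appears in $E[S_n]^2$ inside $E[S_n^2]$ and gets subtracted off when forming the covariance against $\Sigma_n$ whose leading behavior is deterministic-like at this order. Confirming this cancellation cleanly, rather than by brute force, is the delicate point; everything else is the routine "elephant-type recursion" machinery already exercised repeatedly in \cite{Bercu22} and in the earlier lemmata of this section.
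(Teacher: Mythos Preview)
Your approach is correct and essentially identical to the paper's: derive the recursion $E[S_{n+1}^2\Sigma_{n+1}] = (1+(2a+b)/n)\,E[S_n^2\Sigma_n] + (2a/n)\,E[S_n^2] + (b/n)\,E[\Sigma_n(\Sigma_n+1)]$, solve it via $J_n := E[S_n^2\Sigma_n]/c_n(2a+b)$ and the summation identity \eqref{eq:AkimotoTypeFormula}, then subtract $E[S_n^2]E[\Sigma_n]$. Two of your anticipated complications do not in fact arise: the cross moment $E[S_n\Sigma_n]$ never enters the recursion (the $2S_nX_{n+1}\Sigma_n$ piece contributes $(2a/n)S_n^2\Sigma_n$ and is absorbed into the leading coefficient), and there is no $(2s-1)^2$ bookkeeping because $E[S_n^2]$ in \eqref{eq:Bercu22(A.6)} is already independent of $s$.
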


\begin{proof} First we show that
\begin{align}
E[S_n^2\Sigma_n]\sim
\begin{cases}
\dfrac{1}{(b-2a)\Gamma(2b)} \cdot n^{2b} &\mbox{if $a \in [-b,b/2)$,}\\
\dfrac{1}{\Gamma(2b)} \cdot n^{2b}\log n&\mbox{if $a=b/2$,} \\
%\dfrac{n^{4a}}{\Gamma(4a)} \cdot \log n
\dfrac{2a}{b(2a-b)\Gamma(2a+b)} \cdot n^{2a+b}&\mbox{if $a\in (b/2,b]$.}
\end{cases}
\label{eq:Bercu22JSP(C.23)Asymp}
\end{align}
Since
\begin{align*}
E[S_{n+1}^2\Sigma_{n+1} \mid \mathcal{F}_n]&=E[(S_n+X_{n+1})^2(\Sigma_n+X_{n+1}^2) \mid \mathcal{F}_n]\\
&=\left(1+\dfrac{2a+b}{n}\right) \cdot S_n^2\Sigma_n+\dfrac{2a}{n} \cdot S_n^2+\dfrac{b}{n} \cdot \Sigma_n(\Sigma_n+1),
\end{align*}
we have
\begin{align*}
E[S_{n+1}^2\Sigma_{n+1}]
%&=\left(1+\frac{2a+b}{n}\right)E[S_n^2\Sigma_n]+\frac{2a}{n}E[S_n^2]+\frac{b}{n}(E[\Sigma_n^2]+E[\Sigma_n])\\
&=\left(1+\frac{2a+b}{n}\right) \cdot E[S_n^2\Sigma_n]+\dfrac{2a}{n} \cdot E[S_n^2]+\dfrac{b}{n} \cdot E[\Sigma_n(\Sigma_n+1)].
\end{align*}
Setting $J_n := S_n^2\Sigma_n / c_n(2a+b)$, we have
\begin{align*}
E[J_{n+1}] - E[J_n] = \dfrac{2a \cdot E[S_n^2]}{n \cdot c_{n+1}(2a+b)} + \dfrac{b\cdot E[\Sigma_n(\Sigma_n+1)]}{n\cdot c_{n+1}(2a+b)}, 
\end{align*}
and
\begin{align}
E[J_n] = E[J_1] + \sum_{k=1}^{n-1} \dfrac{2a \cdot E[S_k^2]}{k \cdot c_{k+1}(2a+b)}+ \sum_{k=1}^{n-1}  \dfrac{b\cdot E[\Sigma_k(\Sigma_k+1)]}{k\cdot c_{k+1}(2a+b)}.
\label{eq:Bercu22JSP(C.23)MoreGeneral}
\end{align}
From the definition of $X_1$ %\eqref{def:ERWfirststep} 
and \eqref{eq:ERWSDefSigma}, $E[J_1] =1$. 
It is straightforward to prove that
\begin{align}
\sum_{k=1}^{n-1} \dfrac{c_k(x)}{k \cdot c_{k+1} (y)} 
&= \dfrac{1}{x-y} \cdot \left\{ \dfrac{c_n(x)}{c_n(y)} - 1 \right\} \quad \mbox{for $x,y >-1$ with  $x \neq y$.}
\label{eq:AkimotoTypeFormula}
\end{align}
For the case $a \neq b/2$, using \eqref{eq:Bercu22(A.6)}, \eqref{eq:Bercu22(4.20)1st2nd}, \eqref{eq:Bercu22JSP(C.23)MoreGeneral}, and \eqref{eq:AkimotoTypeFormula}, we see that
%230318
%An explicit formula for the case $a \neq b/2$ is given in Eq. , which can be read as 
\begin{align}
 &E[S_n^2 \Sigma_n] \notag \\
&= \dfrac{1}{b-2a} \left\{ 2b \cdot c_n(2b)-b\cdot c_n(b) + \dfrac{2a}{b} \left( 2a \cdot c_n(2a) - (2a+b) \cdot c_n(2a+b)\right) \right\} ,
% &=\dfrac{2a(2a+b)}{b(2a-b)} \cdot c_n(2a+b)  
% - \dfrac{4a^2}{b(2a-b)} \cdot c_n(2a) \notag \\
% &\quad - \dfrac{2b}{2a-b} \cdot c_n(2b) 
% + \dfrac{b}{2a-b} \cdot c_n(b),
 \label{eq:Bercu22JSP(C.23)}
\end{align}
which coincides with Eq. (C.23) in \cite{Bercu22}.
%(the detail of the calculation is omitted in \cite{Bercu22}).
In view of \eqref{def+asymp:c_n(x)}, the first (resp. fourth) term in the right hand side of \eqref{eq:Bercu22JSP(C.23)} is dominant when $a \in [-b,b/2)$ (resp. $a \in (b/2,b]$). This completes the proof of \eqref{eq:Bercu22JSP(C.23)Asymp} for the case $a \neq b/2$. For the critical case $a=b/2$, by \eqref{def+asymp:c_n(x)}, \eqref{eq:Bercu22JSP(A.6)VarAsymp}, and \eqref{eq:Bercu22(4.20)1st2nd},
%\begin{align*}
%E[S_k^2]  
%\sim \dfrac{1}{\Gamma(b)} \cdot k^b\log k \quad \mbox{and} \quad E[\Sigma_k(\Sigma_k+1)] =2 c_k(2b) \sim \dfrac{2k^{2b}}{\Gamma(2b+1)} 
%\end{align*}
%as $k \to \infty$.
%Thus 
in the right hand side of \eqref{eq:Bercu22JSP(C.23)MoreGeneral}, the third term is dominant. Since $2a=b$, 
%by \eqref{eq:AkimotoTypeFormulaAsymp},
we have
\begin{align*}
E[S_n^2\Sigma_n] &\sim c_n(2b) \cdot \sum_{k=1}^{n-1}  \dfrac{2b\cdot c_k(2b)}{k\cdot c_{k+1}(2b)} %\\
%&
\sim %\dfrac{n^{2b}}{\Gamma(2b+1)} \cdot 2b \cdot \log n = 
\dfrac{1}{\Gamma(2b)} \cdot n^{2b} \log n\quad \mbox{as $n \to \infty$,}
\end{align*}
which is \eqref{eq:Bercu22JSP(C.23)Asymp} for $a=b/2$.
Finally, by \eqref{def+asymp:c_n(x)}, \eqref{eq:Bercu22JSP(4.20)m=1}, and \eqref{eq:Bercu22JSP(A.6)VarAsymp}, we have
\begin{align}
E[S_n^2]E[\Sigma_n] \sim \begin{cases}
\dfrac{1}{b(b-2a)\Gamma(b)^2} \cdot n^{2b} &\mbox{if $a\in[-b,b/2)$,} \\
\dfrac{1}{b\Gamma(b)^2} \cdot n^{2b}\log n
%\dfrac{1}{b\Gamma(2a)\Gamma(b)} \cdot n^{2a+b}\log n 
&\mbox{if $a=b/2$.} \\
\dfrac{1}{b(2a-b)\Gamma(2a)\Gamma(b)} \cdot n^{2a+b}&\mbox{if $a \in (b/2,b]$.}
\end{cases}
\label{eq:Sn2SigmanAnother}
\end{align}
The conclusion of Lemma \ref{lem:Sn2Sigman} follows from \eqref{eq:Bercu22JSP(C.23)Asymp} and \eqref{eq:Sn2SigmanAnother}.
\end{proof}

\begin{lemma} \label{lem:Sn4Var} If $b \in (0,1]$ then as $n \to \infty$, 
\begin{align*}
&V[S_n^2] \\
&\sim
\begin{cases}
\dfrac{3b \cdot B(b,b)-1}{(b-2a)^2\Gamma(b)^2} \cdot n^{2b}
%Original:
%\dfrac{1}{(b-2a)^2}\cdot \left\{ \dfrac{3b}{\Gamma(2b)}-\dfrac{1}{\Gamma(b)^2}\right\} \cdot n^{2b}
&\mbox{if $a \in [-b,b/2)$,}\\
\dfrac{3b \cdot B(b,b)-1}{\Gamma(b)^2} \cdot n^{2b}(\log n)^2
%Original:
%\left\{ \dfrac{6a}{\Gamma(4a)}-\dfrac{1}{\Gamma(2a)^2}\right\} \cdot n^{4a}(\log n)^2
&\mbox{if $a=b/2$,}\\
\dfrac{1}{(2a-b)^2 \Gamma(2a)^2} \cdot \left\{ \dfrac{6(2a^2+2ab-b^2)}{4a-b} \cdot B(2a,2a) -1 \right\} \cdot n^{4a}
%Original:
%\dfrac{1}{(2a-b)^2} \cdot \left\{ \dfrac{6(2a^2+2ab-b^2)}{(4a-b)\Gamma(4a)}-\dfrac{1}{\Gamma(2a)^2}\right\} \cdot n^{4a}
&\mbox{if $a \in (b/2,b]$.} \\
\end{cases}
\end{align*}
\end{lemma}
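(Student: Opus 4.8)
The plan is to compute $V[S_n^2] = E[S_n^4] - E[S_n^2]^2$ by first obtaining the asymptotics of the fourth moment $E[S_n^4]$, and then subtracting the known asymptotics of $E[S_n^2]^2$, which follow immediately from \eqref{eq:Bercu22JSP(A.6)VarAsymp}. The main work is therefore to derive $E[S_n^4]$. Following the pattern already used in the proof of Lemma \ref{lem:Sn2Sigman}, I would set up a recursion for $E[S_{n+1}^4 \mid \mathcal{F}_n]$. Since $X_{n+1} \in \{-1,0,+1\}$ with $E[X_{n+1} \mid \mathcal{F}_n] = (a/n)S_n$, $E[X_{n+1}^2 \mid \mathcal{F}_n] = (b/n)\Sigma_n$, and the odd powers $X_{n+1}^3$, $X_{n+1}^k$ for odd $k$ satisfy $X_{n+1}^k = X_{n+1}$, expanding $(S_n+X_{n+1})^4 = S_n^4 + 4S_n^3 X_{n+1} + 6 S_n^2 X_{n+1}^2 + 4 S_n X_{n+1}^3 + X_{n+1}^4$ gives
\begin{align*}
E[S_{n+1}^4 \mid \mathcal{F}_n] = \left(1 + \frac{4a}{n}\right) S_n^4 + \frac{6b}{n} S_n^2 \Sigma_n + \frac{4a}{n} S_n^2,
\end{align*}
using $X_{n+1}^4 = X_{n+1}^2$ and $X_{n+1}^3 = X_{n+1}$. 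Taking expectations, dividing by $c_{n+1}(4a)$, and telescoping as in \eqref{eq:Bercu22JSP(C.23)MoreGeneral} yields
\begin{align*}
\frac{E[S_n^4]}{c_n(4a)} = \frac{E[S_1^4]}{c_1(4a)} + \sum_{k=1}^{n-1} \frac{6b \cdot E[S_k^2\Sigma_k]}{k \cdot c_{k+1}(4a)} + \sum_{k=1}^{n-1} \frac{4a \cdot E[S_k^2]}{k \cdot c_{k+1}(4a)}.
\end{align*}

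Next I would plug in the closed forms already available: $E[S_k^2]$ from \eqref{eq:Bercu22(A.6)} and $E[S_k^2\Sigma_k]$ from \eqref{eq:Bercu22JSP(C.23)} (for $a \neq b/2$) or its critical analogue. Each is a linear combination of terms $c_k(x)$ with $x \in \{b, 2a, 2b, 2a+b\}$, so the sums reduce to evaluations of $\sum_{k=1}^{n-1} c_k(x)/\{k\, c_{k+1}(y)\}$ with $y = 4a$, which is given in closed form by \eqref{eq:AkimotoTypeFormula} whenever $x \neq 4a$. Using \eqref{def+asymp:c_n(x)} one identifies the dominant term: when $a \in [-b, b/2)$ the exponent $2b$ beats $4a$, $2a$, $2a+b$ and the dominant contribution comes from the $c_k(2b)$-piece of $E[S_k^2\Sigma_k]$; when $a \in (b/2, b]$ the exponent $4a$ dominates, and here the $x = 4a$ resonance appears (from the $c_k(2a+b)$ and $c_k(2a)$ parts after interaction, plus the boundary term), producing the extra logarithmic-free $n^{4a}$ with the more elaborate coefficient $\frac{6(2a^2+2ab-b^2)}{4a-b}B(2a,2a)-1$; in the critical case $a = b/2$ one has $4a = 2b$, so both the $c_k(2b)$ term in $E[S_k^2\Sigma_k]$ and the resonance combine to give $n^{2b}(\log n)^2$. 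For the coefficients, one translates the $c_n$-products into $\Gamma$-functions via \eqref{def+asymp:c_n(x)} and uses $B(x,y) = \Gamma(x)\Gamma(y)/\Gamma(x+y)$; the identity $3b\cdot B(b,b) - 1$ should emerge in the subcritical and critical cases, and the subtraction of $E[S_n^2]^2 \sim n^{2b}/\{(b-2a)^2\Gamma(b)^2\}$ (resp. the analogue) is what turns a bare $B(b,b)$-type constant into $3b\cdot B(b,b) - 1$.

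The main obstacle I expect is purely computational bookkeeping in the supercritical regime: there the relevant exponent $4a$ coincides with one of the $c_n$-arguments appearing in the summand, so \eqref{eq:AkimotoTypeFormula} does not apply to that piece and one must instead use $\sum_{k=1}^{n-1} 1/\{k + 4a\} \sim \log n$ carefully — but since $4a > 2b$ strictly (as $a > b/2$) this $\log$ factor is killed by comparison with terms of exponent $4a$ that do carry a genuine power, so one has to track which combination actually survives and assemble the constant $\frac{6(2a^2+2ab-b^2)}{4a-b}B(2a,2a)$. Once the dominant term of $E[S_n^4]$ is pinned down in each of the three regimes, subtracting $E[S_n^2]^2$ (negligible relative to $E[S_n^4]$ in the first two regimes, of the same order $n^{4a}$ in the third) and simplifying gives exactly the three stated asymptotics for $V[S_n^2]$. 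A final remark: the positivity claims $Q'_b > 0$, $R'_{a,b} > 0$ and the eventual identification of $\rho[S_n^2,\Sigma_n]$ then follow by combining this lemma with Lemma \ref{lem:Sn2Sigman} and Lemma \ref{lem:VSigman}, exactly as in the assembly at the end of Section \ref{sec:proofsB1}.
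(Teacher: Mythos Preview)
Your approach mirrors the paper's almost exactly: derive a recursion for $E[S_{n+1}^4\mid\mathcal{F}_n]$, divide by $c_n(4a)$ and telescope, insert the known closed forms for $E[S_k^2]$ and $E[S_k^2\Sigma_k]$, evaluate the resulting sums via \eqref{eq:AkimotoTypeFormula}, and finally subtract $E[S_n^2]^2$. The paper does precisely this (quoting Bercu's Eq.~(C.22) for the recursion and arriving at the closed form \eqref{eq:Bercu22JSP(C.24)}), so the strategies coincide.

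There is, however, one slip in your recursion. You correctly expand $(S_n+X_{n+1})^4$ and even note $X_{n+1}^4=X_{n+1}^2$, but then you drop that term: its conditional expectation is $(b/n)\Sigma_n$, so the correct identity is
\[
E[S_{n+1}^4\mid\mathcal{F}_n]=\Bigl(1+\frac{4a}{n}\Bigr)S_n^4+\frac{6b}{n}\,S_n^2\Sigma_n+\frac{4a}{n}\,S_n^2+\frac{b}{n}\,\Sigma_n,
\]
and the telescoped expression acquires a fourth sum $\sum_{k=1}^{n-1}\frac{b\,E[\Sigma_k]}{k\,c_{k+1}(4a)}$, exactly as in the paper's \eqref{eq:Bercu22JSP(C.24)MoreGeneral}. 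In the subcritical and critical regimes this extra piece contributes only at order $n^{b}$ (resp.\ $O(n^{2b})$) to $E[S_n^4]$ and is negligible, so your conclusions there are unaffected. But in the supercritical regime $a>b/2$ it matters: since $b<4a$, the sum $\sum_{k}\frac{b\,c_k(b)}{k\,c_{k+1}(4a)}$ converges to $b/(4a-b)$ by \eqref{eq:AkimotoTypeFormula}, and hence contributes at the leading order $c_n(4a)$. Without it your coefficient of $c_n(4a)$ is off by exactly $b/(4a-b)$, and you would not recover the stated constant $\frac{6(2a^2+2ab-b^2)}{4a-b}\,B(2a,2a)-1$ after subtracting $E[S_n^2]^2$. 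Once that term is restored, your outline goes through and matches the paper line for line.
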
 

\begin{proof} First we prove
\begin{align}
E[S_n^4] \sim
\begin{cases}
\dfrac{3b}{(b-2a)^2\Gamma(2b)}\cdot n^{2b}&\mbox{if $a \in [-b,b/2)$,}\\
\dfrac{3b}{\Gamma(2b)} \cdot n^{2b}(\log n)^2&\mbox{if $a=b/2$,}\\
%\dfrac{6a}{\Gamma(4a)} \cdot n^{4a}(\log n)^2
\dfrac{6(2a^2+2ab-b^2)}{(2a-b)^2(4a-b)\Gamma(4a)}\cdot n^{4a}&\mbox{if $a \in (b/2,b]$.}\\
\end{cases}
\label{eq:Bercu22JSP(C.24)Asymp}
\end{align}
%Using Bercu's result:
%From Eq. (3.23) in \cite{Bercu22}, if $2a>b$, then
%\begin{align*}
%\lim_{n \to \infty} E\left[ \left(\dfrac{S_n}{n^a}\right)^4\right] = \dfrac{6(2a^2+2ab-b^2)}{(2a-b)^2(4a-b)\Gamma(4a)}.
%\label{eq:Bercu22JSP(3.23)}
%\end{align*}
%
%A similar method can be applied to $E[S_n^4]$: 
Our starting point is Eq. (C.22) in \cite{Bercu22}, i.e.
\begin{align*}
E[S_{n+1}^4]=\left(1+\frac{4a}{n}\right)\cdot E[S_n^4]+\dfrac{6b}{n}\cdot E[S_n^2\Sigma_n]+\dfrac{4a}{n}\cdot E[S_n^2]+\dfrac{b}{n} \cdot E[\Sigma_n].
\end{align*}
Putting $L_n := S_n^4/ c_n(4a)$, we see that
\begin{align}
E[L_n] = 1 + \sum_{k=1}^{n-1} \dfrac{6b \cdot E[S_k^2\Sigma_k]}{k \cdot c_{k+1}(4a)} + \sum_{k=1}^{n-1} \dfrac{4a \cdot E[S_k^2]}{k \cdot c_{k+1}(4a)} + \sum_{k=1}^{n-1} \dfrac{b \cdot E[\Sigma_k]}{k \cdot c_{k+1}(4a)}.
\label{eq:Bercu22JSP(C.24)MoreGeneral}
\end{align}
For the case $a \neq b/2$, using \eqref{eq:Bercu22JSP(C.23)}, \eqref{eq:Bercu22(A.6)}, \eqref{eq:Bercu22JSP(4.20)m=1}, and \eqref{eq:AkimotoTypeFormula}, we have
%230318
%we can see that if $a \neq b/2$ then
\begin{align}
E[S_n^4] &= \dfrac{24a(2a^2+2ab-b^2)}{(2a-b)^2(4a-b)} \cdot c_n(4a) 
-\dfrac{12a(2a+b)}{(2a-b)^2} \cdot c_n(2a+b) \notag \\
&\quad +\dfrac{8a}{2a-b} \cdot c_n(2a)
+ \dfrac{6b^2}{(2a-b)^2} \cdot c_n(2b)- \dfrac{b(5b-2a)}{(2a-b)(4a-b)} \cdot c_n(b), 
\label{eq:Bercu22JSP(C.24)}
\end{align}
which coincides with Eq. (C.24) in \cite{Bercu22}.
By \eqref{def+asymp:c_n(x)}, the fourth (resp. first) term in the right hand side of \eqref{eq:Bercu22JSP(C.24)} is dominant when $a \in [-b,b/2)$ (resp. $a \in (b/2,b]$). This completes the proof of \eqref{eq:Bercu22JSP(C.24)Asymp} for the case $a \neq b/2$. For the critical case $a=b/2$, from \eqref{def+asymp:c_n(x)}, \eqref{eq:Bercu22JSP(C.23)Asymp}, \eqref{eq:Bercu22JSP(A.6)VarAsymp}, and \eqref{eq:Bercu22(4.20)1st2nd}, the first term is dominant in the right hand side of \eqref{eq:Bercu22JSP(C.24)MoreGeneral}. Noting that $4a=2b$, we have
\begin{align*}
E[S_n^4] &\sim c_n(2b) \cdot \sum_{k=1}^{n-1}  \dfrac{6b\cdot E[S_k^2\Sigma_k]}{k\cdot c_{k+1}(2b)} 
%&\sim \dfrac{n^{2b}}{\Gamma(2b+1)} \cdot  \sum_{k=1}^{n-1}  \dfrac{6b\Gamma(2b+1) \log k }{\Gamma(2b) k} 
\sim \dfrac{6b n^{2b}}{\Gamma(2b)} \cdot  \sum_{k=1}^{n-1}  \dfrac{ \log k }{ k} 
\sim \dfrac{3b}{\Gamma(2b)} \cdot  n^{2b} (\log n)^2 %\quad \mbox{as $n \to \infty$,}
\end{align*}
as $n \to \infty$, which is \eqref{eq:Bercu22JSP(C.24)Asymp} for $a=b/2$.
%By \eqref{eq:Bercu22JSP(A.6)VarAsymp}, 
%\begin{align}
%E[S_n^2]^2 \sim \begin{cases}
%\dfrac{1}{(b-2a)^2\Gamma(b)^2} \cdot n^{2b} &\mbox{if $a<b/2$,} \\
%\dfrac{1}{\Gamma(b)^2} \cdot n^{2b}(\log n)^2 &\mbox{if $a=b/2$.} \\
%\dfrac{1}{\Gamma(2a)^2} \cdot n^{4a}(\log n)^2
%\dfrac{1}{(2a-b)^2\Gamma(2a)^2} \cdot n^{4a}&\mbox{if $a>b/2$.}
%\end{cases}
%\label{eq:Sn4Another}
%\end{align}
Lemma \ref{lem:Sn4Var} follows from \eqref{eq:Bercu22JSP(C.24)Asymp} and \eqref{eq:Bercu22JSP(A.6)VarAsymp}.
%\eqref{eq:Sn4Another}.
\end{proof}

Theorem \ref{thm:Taniguchi23ThmB} follows from Lemmata \ref{lem:Sn2Sigman}, \ref{lem:Sn4Var}, and \ref{lem:VSigman}.

%Original:
%\begin{align*}
%K'_b&=\frac{1-\frac{\Gamma(2b)}{b\Gamma(b)^2}}{\Gamma(2b)\sqrt{(\frac{3b}{\Gamma(2b)}-\frac{1}{\Gamma(b)^2})(\frac{1}{b\Gamma(2b)}-\frac{1}{b^2\Gamma(b)^2})}}, \\
%C'_{a,b}&=\frac{2a(1-\frac{\Gamma(2a+b)}{2a\Gamma(2a)\Gamma(b)})}{b\Gamma(2a+b)\sqrt{(\frac{6(2a^2+2ab-b^2)}{(4a-b)\Gamma(4a)}-\frac{1}{\Gamma(2a)^2})(\frac{1}{b\Gamma(2b)}-\frac{1}{b^2\Gamma(b)^2})}}.
%\end{align*}

%\begin{acknowledgements}
\section*{Acknowledgement}
%If you'd like to thank anyone, place your comments here
%and remove the percent signs.
The authors thank the referee for helpful and constructive comments, in particular on the heuristics behind our results in Section \ref{ss:CorrERWS}. M.T. is partially supported by JSPS KAKENHI Grant Numbers JP19H01793, JP19K03514 and JP22K03333.
%\end{acknowledgements}
%\appendix

%% The Appendices part is started with the command \appendix;
%% appendix sections are then done as normal sections
%% \appendix

%% \section{}
%% \label{}

%% If you have bibdatabase file and want bibtex to generate the
%% bibitems, please use
%%
%%  \bibliographystyle{elsarticle-num} 
%%  \bibliography{<your bibdatabase>}

%% else use the following coding to input the bibitems directly in the
%% TeX file.

% Non-BibTeX users please use

\end{document}